\documentclass[3p,10pt]{elsarticle}

\usepackage{amsmath,amsfonts}
\usepackage{amsthm}
\usepackage{stmaryrd}

\usepackage{graphicx}
\usepackage{color}
\usepackage{float}
\usepackage{booktabs}
\usepackage{siunitx}
\usepackage{multicol}
\usepackage{multirow}

\usepackage{soul}
\usepackage{algorithmic}
\usepackage{algorithm}

\usepackage{amsmath}
\usepackage{hyperref}

\definecolor{darkgreen}{rgb}{0.0, 0.42, 0.24}

\theoremstyle{plain}
\newtheorem{theorem}{Theorem}

\newtheorem{lemma}[theorem]{Lemma}

\newtheorem{remark}{Remark}

\newcommand\OO{\Omega}

\newcommand\GN{\Gamma_{\mathrm{N}}}
\newcommand\GD{\Gamma_{\mathrm{D}}}
\newcommand\RR{\mathbb{R}}

\newcommand\NN{\mathbb N}
\newcommand\PP{\mathbb{P}}

\newcommand\ff{{\boldsymbol f}}

\newcommand\gD{{\boldsymbol g}_{\mathrm{D}}}
\newcommand\gN{{\boldsymbol g}_{\mathrm{N}}}

\newcommand\nn{{\boldsymbol n}}

\newcommand\xx{{\boldsymbol x}}

\newcommand\uu{{\boldsymbol u}}
\newcommand\zz{{\boldsymbol z}}
\newcommand\vv{{\boldsymbol v}}
\newcommand\VV{\boldsymbol V}

\newcommand\ww{{\boldsymbol w}}
\newcommand\II{\boldsymbol{I}}

\newcommand{\defi}{{\,:=\,}}

\newcommand{\ddiv}{\mathrm{div}}
\newcommand{\zero}{\boldsymbol{0}}
\newcommand{\eps}{\varepsilon}
\newcommand{\balpha}{\boldsymbol{\alpha}}
\newcommand{\bsigma}{\boldsymbol{\sigma}}

\newcommand{\btau}{\boldsymbol{\tau}}

\newcommand{\HH}{\boldsymbol{H}}

\newcommand{\medio}{\dfrac{1}{2}}
\newcommand{\EE}{\mathcal{E}}

\newcommand{\TT}{\mathcal{T}}
\newcommand{\etu}{\eta^{\uu}}
\newcommand{\etsgma}{\eta^{\bsigma}}

\newcommand{\T}{\mathcal{T}}

\newcommand{\nh}[1]{{\left\vert\kern-0.25ex\left\vert\kern-0.25ex\left\vert #1 \right\vert\kern-0.25ex\right\vert\kern-0.25ex\right\vert}}

\newcommand{\nhc}[1]{{\left\vert\kern-0.25ex\left\vert\kern-0.25ex\left\vert #1 
    \right\vert\kern-0.25ex\right\vert\kern-0.25ex\right\vert^2}}

\newcommand{\dmean}[1]{{\left\{\kern-0.6ex\left\{ #1   \right\}\kern-0.6ex\right\}}}

\numberwithin{equation}{section}

\newcommand{\tr}{\mathrm{tr}\,}
\newcommand{\QQ}{\boldsymbol{Q}}
\newcommand{\PTk}{\mathcal{P}_T^k}
\newcommand{\PEk}{\mathcal{P}_E^k}

\usepackage{xcolor}

\usepackage[numbers]{natbib}

\newcommand{\RT}[1]{\mathbf{RT}_{#1}}                % Raviart--Thomas space
\newcommand{\tRT}[1]{\mathbb{RT}_{#1}}               % tensor Raviart--Thomas space
\newcommand{\calO}{\mathcal{O}}
\newcommand{\Hdiv}[1]{H({\rm{div}};#1)}              % Hdiv (vectorial)
\newcommand{\xiu}{\xi^{\uu}}
\newcommand{\xisigma}{\xi^{\bsigma}}

\begin{document}
%\linenumbers

\begin{frontmatter}

  \title{Convergence analysis of an augmented dual-mixed finite element method with skew-symmetric Nitsche stabilization for the generalized Oseen equations with mixed boundary conditions\tnoteref{t1}}

  \author[add1]{Cristian C\'arcamo}
  \ead{carcamo@wias-berlin.de}
  \address[add1]{Weierstra{\ss}-Institut f\"ur Angewandte Analysis  und  Stochastik,  Leibniz-Institut  im  Forschungsverbund  Berlin  e.V. (WIAS),  Berlin,   Germany}

  \author[add2]{Abner H. Poza}

  \ead{apoza@ucsc.cl}
  %\date{\today}
  \address[add2]{Departamento de Matem\'atica y F\'isica Aplicadas, Universidad Cat\'olica de la Sant\'isima Concepci\'on, Casilla 297, Concepci\'on, Chile, and, Grupo de Investigaci\'on en An\'alisis Num\'erico y C\'alculo Cient\'ifico, $GIANuC^2$, Concepci\'on, Chile}

  \author[add3]{Ramiro Rebolledo\corref{mycorrespondingauthor}}
  \cortext[mycorrespondingauthor]{Corresponding author}%
  \ead{ramirorebolledo@udec.cl}
          
  \address[mymainaddress]{Departamento de Agroindustrias, Facultad de Ingenier\'ia Agr\'icola, Universidad de Concepci\'on, Casilla 237, Chill\'an, Chile}
          
%          \tnotetext[t1]{\textcolor{darkgreen}{Preliminary version date: \today}}

\begin{abstract}
In this paper, building upon the ideas recently introduced in \cite{CCHP26}, we propose and analyze a new skew-symmetric Nitsche-augmented mixed scheme for the generalized Oseen equations with non-solenoidal convective fields, subject to mixed boundary conditions. The scheme is based on a continuous pseudostress-velocity augmented dual-mixed finite element formulation, in which the Dirichlet datum is imposed naturally, whereas the prescribed normal trace of the pseudostress is incorporated into the ${\mathbf H}(\operatorname{div};\OO)$ stress space. We establish the well-posedness of the continuous formulation by means of a classical lifting argument and its corresponding homogeneous problem. The essential boundary condition is then weakly enforced in the discrete scheme through suitable stabilizing Nitsche terms. We establish the existence and uniqueness of the solution to the discrete scheme, allowing the use of general conforming approximation spaces for both the pseudostress and the velocity. Moreover, in the case of homogeneous Dirichlet boundary conditions and general Neumann data, 
we establish the stability of the Nitsche scheme. Under mild regularity assumptions on the exact solution, we prove optimal convergence rates in an energy norm. Finally, numerical experiments in two and three dimensions confirm the theoretical convergence rates and assess the performance of the proposed scheme for non-solenoidal convective fields and solutions exhibiting boundary layers.

\end{abstract}

          \begin{keyword}
            Augmented mixed formulation, generalized Oseen equations, Nitsche method.
          \end{keyword}

\end{frontmatter}

%\textcolor{red}{Llegue hasta ac\'a: Martes 6 de Octubre 2026 (17:33 hrs).}
%%%%%%%%%%%%%%%%%%%%%%%%%%%%%%%%%%%%% 
\section{Introduction}
%%%%%%%%%%%%%%%%%%%%%%%%%%%%%%%%%%%%% 
%
The generalized Oseen equations arise from the linearization of both the steady and unsteady Navier--Stokes equations, which describe the motion of viscous incompressible fluids in a wide range of physical settings.  The Navier--Stokes equations play a fundamental role in numerical analysis and scientific computing, with broad applications in engineering, geophysics, and biomedical sciences, among other fields (see, e.g.,~\cite{CuRo11,FoQuVe09,Kle97}). The numerical solution of these equations remains challenging due to the nonlinear convective term, 
 particularly at high Reynolds numbers, where convection may dominate diffusion.
In this context, generalized Oseen-type problems naturally arise at each step of fixed-point or Newton iterations applied to the Navier--Stokes equations (see, e.g.,~\cite{John16,Te01}).

Classical mixed finite element methods have become widely used tools for the numerical approximation of fluid flows, as they allow for the simultaneous approximation of primal variables, such as velocity and pressure, together with additional variables of physical interest, often referred to as dual variables, such as the pseudostress tensor or the velocity gradient, depending on the formulation. Furthermore, other relevant quantities, such as vorticity, can be accurately recovered through suitable postprocessing techniques~\cite{BookBrezziFortin2013,CaiTongVassilevskiWang2010,Gatica2014}. The well-posedness of these formulations typically relies on an inf-sup condition, also known as the Ladyzhenskaya--Babu\v{s}ka--Brezzi (LBB) condition, which must be satisfied by the discrete approximation spaces and may impose compatibility restrictions between the spaces associated with the different unknowns. An alternative approach consists of incorporating residual-based stabilization terms derived from the governing equations of the model problem, such as the conservation of momentum and mass equations, with the aim of allowing more general choices of finite element approximation spaces. 
This type the formulation is knows in the literature as augmented formulations (for an incomplete list, see for example~\cite{AnayaCaraballoRuizBaierTorres2023,CamanoGaticaOyarzuaTierra2016,FigueroaGaticaMarquez2009,GaticaGaticaMarquez2011}).

The imposition of boundary conditions in finite element formulations can be handled in different ways. For instance, a natural boundary condition in a primal formulation becomes an essential condition in the corresponding dual-mixed formulation~\cite{BookBrezziFortin2013,Gatica2014}. Consequently, such a condition, as in the case of Neumann boundary conditions, must be incorporated into the corresponding continuous function space, which may complicate the construction and practical implementation of suitable finite element subspaces satisfying the prescribed boundary conditions. One way to avoid imposing this constraint directly on the approximation space is to enforce the essential boundary condition weakly by means of a Lagrange multiplier~\cite{Ba73-2,Gatica2007,GaticaGaticaMarquez2014,GaticaGaticaSequeira2015}. However, this approach introduces an additional unknown and requires an additional finite element approximation space, together with a discrete inf-sup condition that must be satisfied. Another approach consists of homogenizing the essential boundary conditions by means of a suitable lifting. Although this technique allows the problem to be reformulated in terms of homogeneous boundary conditions, its discrete implementation may require constructing a lifting compatible with the finite element approximation spaces~\cite{BaBeBu25,BCG20,Barrios-Cascon-Gonzalez-Oseen}. For problems involving interface conditions between different subdomains, mortar finite element methods~\cite{BeMaPa92} provide a suitable alternative, particularly when the meshes do not match across the interface. These methods are also of interest in elasticity, especially for contact problems involving two deformable bodies~\cite{GuRaVi23}.

Another approach for the weak imposition of essential boundary conditions is the classical penalty method~\cite{babuska-73b,BabuskaZlamal1973}, which incorporates a mesh-dependent penalty term into the variational formulation without introducing additional unknowns. However, this method is not consistent, and its convergence properties depend on the choice of the penalty parameter (see, e.g.,~\cite{chouly2026,Gustafsson2025}). An alternative is Nitsche's method, originally introduced in~\cite{Nit71}, which weakly enforces essential boundary conditions while preserving consistency (see, e.g.,~\cite{chouly2026,Stenberg1995}). Under suitable assumptions, Nitsche's method allows optimal convergence rates to be achieved without increasing the number of discrete unknowns~\cite{Be02,Nit71}. Both symmetric and nonsymmetric formulations are available, the latter including skew-symmetric variants. While the classical symmetric formulation requires a sufficiently large penalty parameter to ensure coercivity, the stability of a penalty-free nonsymmetric Nitsche formulation has also been established for Poisson and convection--diffusion problems~\cite{Burman2012}. Furthermore, like the classical penalty method, Nitsche's method avoids introducing additional boundary unknowns and the associated saddle-point structure arising in Lagrange multiplier formulations.

In recent years, there has been growing interest in the use of Nitsche's method for the weak imposition of nonstandard boundary conditions in different variational formulations. Among the numerous contributions involving primal formulations, we first mention~\cite{ChibaSaito2023}, where Nitsche's method is employed to impose nonhomogeneous Robin boundary conditions for the Poisson problem. Also for the Poisson problem, but considering Dirichlet boundary conditions, a symmetric Nitsche formulation combined with least-squares and finite cell stabilization is proposed in~\cite{LarssonEtAl2022}, within the framework of unfitted finite element methods. In the context of linear elasticity, a Nitsche-based finite element approximation of frictionless unilateral contact conditions of Signorini type is studied in~\cite{BretinEtAl2023}, with particular emphasis on the convergence analysis arising in shape optimization. 

On the other hand, concerning mixed formulations, a Nitsche-based mixed finite element method is proposed in~\cite{AnayaArayaCapunayChouly2027} for the Poisson problem with mixed boundary conditions, in which the Neumann boundary condition is weakly enforced through a symmetric Nitsche formulation. The discrete scheme employs Brezzi--Douglas--Marini (BDM) elements, and the approach is further extended to the Poisson problem with Signorini boundary conditions. In a different context, two strategies inspired by contact mechanics are proposed in~\cite{GattiEtAl2024} for the weak imposition of nonlinear seepage boundary conditions in Richards' equation. The first approach relies on a strongly consistent penalty term inspired by Nitsche's method, whereas the second employs a hybridized formulation in which the boundary pressure is introduced as an additional unknown.

For fluid flow problems, three variants of Nitsche's method are studied in~\cite{araya2024} for the Stokes equations with slip boundary conditions. These schemes combine the weak imposition of essential boundary conditions through Nitsche terms with a stabilized velocity-pressure formulation based on equal-order $\mathbb P_k^n\times\mathbb P_k$ Lagrange finite elements. For the Brinkman problem, a symmetric Nitsche scheme based on a velocity-pressure formulation with grad-div stabilization is proposed in~\cite{AnayaCapunayCaraballoVellojin2026} to weakly impose Dirichlet and tangential boundary conditions. In the context of porous media flow, two mixed finite element formulations for the Darcy problem with Dirichlet and Neumann boundary conditions are analyzed in~\cite{BurmanPuppi2022}, using Raviart--Thomas elements conforming in $H(\ddiv;\OO)$. The first formulation is consistent and belongs to the class of Nitsche-type methods, whereas the second follows a penalty approach. Further applications of Nitsche's method to fluid flow problems can be found, among others, in~\cite{Bar26,Bar24,blank2018}.

More recently, a skew-symmetric Nitsche scheme based on an augmented dual-mixed finite element formulation has been proposed in~\cite{CCHP26} for the Brinkman problem with mixed Dirichlet--Neumann boundary conditions. The augmented formulation incorporates additional terms derived from the governing equations, allowing greater flexibility in the choice of finite element approximation spaces without requiring the discrete inf-sup compatibility conditions of the underlying mixed formulation.

Motivated by the preceding developments, in this work we build upon the frameworks introduced in~\cite{Barrios-Cascon-Gonzalez-Oseen,BCG20} to propose and analyze a new skew-symmetric Nitsche scheme for the generalized Oseen equations with non-solenoidal convective fields, mixed Dirichlet--Neumann boundary conditions, and non-null source terms. The method is based on an augmented dual-mixed pseudostress--velocity formulation, whose well-posedness at the continuous level is established by means of a suitable lifting of the Neumann datum. At the discrete level, the essential boundary condition is weakly enforced through consistent skew-symmetric Nitsche terms, using Raviart--Thomas elements for the pseudostress tensor and Lagrange elements for the velocity. We establish the existence and uniqueness of the discrete solution and prove the stability of the proposed scheme for homogeneous Dirichlet boundary conditions and general Neumann data. Furthermore, in contrast to~\cite{CCHP26}, we derive optimal convergence rates in an energy norm under weaker regularity assumptions on the exact solution. 
The proposed scheme does not introduce additional unknowns into the augmented formulation and can be readily implemented using the finite element library \texttt{FEniCS}~\cite{fenics15}. Finally, numerical experiments in two and three dimensions confirm the theoretical convergence rates and illustrate the performance of the method for non-solenoidal convective fields and convection-dominated flow regimes.

The remainder of this paper is organized as follows. In Section~\ref{model}, we introduce the model problem and the corresponding functional setting. We then recall the pseudostress -velocity formulation and establish the well-posedness of the resulting augmented variational formulation under suitable assumptions on the stabilization parameters. In Section~\ref{Nitsche}, we introduce the finite element approximation spaces and formulate the discrete scheme, in which the essential boundary condition is weakly enforced through suitable Nitsche terms. We also establish the existence and uniqueness of the discrete solution. Section~\ref{error} is devoted to the error analysis. We first establish the error orthogonality property and derive suitable approximation properties of the interpolation operators, which are subsequently used to derive an a priori error estimate in the energy norm. Finally, in Section~\ref{num}, we present numerical experiments that confirm the theoretical convergence rates and illustrate the performance of the proposed method.

%%%%%%%%%%%%%%%%%%%%%%%%%%%%%%%%%%%%   
\section{Model problem and preliminary results} \label{model}
%%%%%%%%%%%%%%%%%%%%%%%%%%%%%%%%%%%% 

\subsection{Function spaces}
Throughout this paper, let $\Omega\subset\RR^n$, $n\in\{2,3\}$, be a bounded Lipschitz domain with boundary $\Gamma\defi\GD\cup\GN$, where $\GD$ and $\GN$ are nonempty and disjoint. We denote by $\nn$ the outward unit normal vector to $\Gamma$.
We adopt standard notation for Sobolev spaces, inner products, and norms (see, e.g.,~\cite{EG21-FE1}). In particular, for any open set $\calO\subset\RR^n$, we denote by $L^2(\calO)$ the space of square-integrable functions on $\calO$. Moreover, $\|\cdot\|_{m,\calO}$ and $|\cdot|_{m,\calO}$ denote the standard norm and seminorm, respectively, in $H^m(\calO)$, $H^m(\calO)^n$, and $H^m(\calO)^{n\times n}$, for $m\geq 0$, with the convention $H^0(\calO)=L^2(\calO)$.  On the other hand, given a scalar field $q$ and a vector field $\vv=(v_i)_{i=1,\dots,n}$, we set the laplacian, gradient and divergence operators, respectively by
\[
\Delta q\defi \sum_{i=1}^n\frac{\partial^2 q}{\partial x_i^2}
\,,\quad
\nabla q
\defi\begin{pmatrix}
\dfrac{\partial q}{\partial x_i}
\end{pmatrix}_{i=1,\dots,n}
\quad\text{ and }\qquad
\nabla\cdot\vv\defi \sum_{i=1}^n\dfrac{\partial v_i}{\partial x_i}\,.
\]
Furthermore, the Laplacian of a vector field is defined componentwise, whereas the divergence of a tensor field is taken row-wise. We also introduce the Hilbert spaces
$\Hdiv{\OO}\defi\{\vv\in L^2(\OO)^n:\nabla\cdot\vv\in L^2(\OO)\}$
and
$\HH\defi\{\btau\in L^2(\OO)^{n\times n}:\nabla\cdot\btau\in L^2(\OO)^n\}$,
as well as the closed subspace
$\VV\defi\{\btau\in\HH:\btau\nn=\zero\text{ on }\GN\}$,
where the boundary condition is understood in the sense of normal traces.
In all these cases, we consider the standard $H(\mathrm{div})$-norm $\|\cdot\|_{\ddiv;\OO}$. For vector fields, we introduce the following Hilbert spaces:
%On the other hand, for vector fields, we consider the following Hilbert spaces:
\begin{eqnarray*}
  \QQ
  &\defi
  &H^1(\OO)^n\,,
  \\
  \HH^1_{\GD}(\OO)
  &\defi
  &\{\vv\in H^1(\OO)^n:\vv=\boldsymbol{0} \textrm{ on }\GD\}\,,
  \\
  \HH^1_{\GN}(\OO)
  &\defi
  &\{\vv\in H^1(\OO)^n:\vv=\boldsymbol{0} \textrm{ on }\GN\}\,,
  \\
  \HH^{1/2}_{00}(\GD)
  &\defi
  &\left\{\vv|_{\GD}: \,\vv \in \HH^1_{\GN}(\OO)\right\}\,,
  \\
  \HH^{1/2}_{00}(\GN)
  &\defi
  &\left\{\vv|_{\GN}: \,\vv \in \HH^1_{\GD}(\OO)\right\}\,.
\end{eqnarray*}
We denote by $\HH_{00}^{-1/2}(E) \defi (\HH_{00}^{1/2}(E))'$ with the topological dual of the Lions--Magenes space $\HH_{00}^{1/2}(E)$, endowed with the duality pairing $\langle\cdot,\cdot\rangle_{E}$, where $E\in\{\GD,\GN\}$  (for details, see \cite{SayasBrownHassell2020}).

%The latter space is equipped with the norm
%\begin{equation}
%\label{divnormtensor}
%\|\btau\|_{\ddiv;\OO}^2
%\defi
%\|\btau\|_{0,\OO}^2
%+
%\|\nabla\cdot\btau\|_{0,\OO}^2
%\qquad \forall\,\btau\in\HH.
%\end{equation}
%
%\vspace{5cm}
%
%
%Finally, we use $\zero$ to denote a generic null vector, %(including the null functional and operator),
%and use $C$ % and $c$
%with or without subscripts %, tildes, or hats,
%to denote generic constants independent of the discretization parameters, whose value may vary from one occurrence to another.
%
%
%Throughout the paper, we will consider the following Hilbert spaces:
%\begin{eqnarray*}
%  \HH
%  &\defi
%  &\bHdiv{\OO}\,,
%  \\
%  \QQ
%  &\defi
%  &H^1(\OO)^n\,,
%  \\
%  \VV
%  &\defi
%  &\{ \btau \in \HH: \btau \nn = \zero \textrm{ on }\GN\}\,,
%  \\
%  \HH^1_{\GD}(\OO)
%  &\defi
%  &\{\vv\in H^1(\OO)^n:\vv=\boldsymbol{0} \textrm{ on }\GD\}\,,
%  \\
%  \HH^1_{\GN}(\OO)
%  &\defi
%  &\{\vv\in H^1(\OO)^n:\vv=\boldsymbol{0} \textrm{ on }\GN\}\,,
%  \\
%  \HH^{1/2}_{00}(\GD)
%  &\defi
%  &\left\{\vv|_{\GD}: \,\vv \in \HH^1_{\GN}(\OO)\right\}\,,
%  \\
%  \HH^{1/2}_{00}(\GN)
%  &\defi
%  &\left\{\vv|_{\GN}: \,\vv \in \HH^1_{\GD}(\OO)\right\}\,.
%\end{eqnarray*}
%Furthermore, we denote by $\HH_{00}^{-1/2}(\GN) \defi (\HH_{00}^{1/2}(\GN))'$ with the topological dual of the Lions--Magenes space $\HH_{00}^{1/2}(\GN)$, endowed with the duality pairing $\langle\cdot,\cdot\rangle_{E}$, where $E\in\{\GD,\GN\}$  (for details, see \cite{SayasBrownHassell2020}).
%

\subsection{Problem model}

In this work, we consider the generalized  Oseen problem: \emph{Find a velocity vector field $\uu $ and a pressure scalar field $p$ such as}
\begin{equation}
  \label{Porig}
  (P) \qquad
  \left\{
    \begin{array}{rcll}
      - \nabla \cdot ( \nu \, \nabla \uu - p \II)
      + \theta\, \uu
      + (\nabla \uu )\balpha
      &=&\ff &\text{in }\OO\,,
      \\
      \nabla\cdot\uu
      &=& 0 & \text{in }\OO\,,
      \\
      \uu
      &=& \gD &\text{on }\GD\,,
      \\
      (-\nu\,\nabla \uu + p \II)\nn
      &=& \gN &\text{on }\GN\,,
    \end{array}
  \right.
\end{equation}
where $\nu>0$ is the kinematic viscosity, $\theta>0$ is a reaction coefficient, $\balpha\in W^{1,\infty}(\OO)^n$ denotes a convective velocity field, not necessarily solenoidal, $\II$ is the identity matrix of $\RR^{n \times n}$, 
$\gD \in \HH_{00}^{1/2}(\GD)$, $\gN \in\HH_{00}^{-1/2}(\GN)$, 
and $\ff\in L^2(\OO)^n$ is a body force vector. Moreover, we assume the following conditions on the convective term $\balpha$:

\begin{itemize}
 \item[(H1)] $\| \nabla \cdot \balpha \|_{\infty,\OO} < \varepsilon$ in $\OO$,
 where $\varepsilon>0$ is  small;
  \item[(H2)] $\balpha \cdot \nn \ge 0$ on $\GN$.
\end{itemize}

\begin{remark}
  Typically, the reaction parameter $\theta$ arises from a time discretization and is given by $\frac{1}{\Delta t}$, where $\Delta t$ denotes the time-step size.
  In turn, $\balpha$ is commonly obtained from a linearization of the Navier--Stokes equations and is therefore regarded as a known velocity field. Since the underlying flow is incompressible, it is natural to assume that $\balpha$ is nearly divergence-free, which motivates assumption (H1). Furthermore, assumption (H2) is naturally satisfied by the boundary conditions arising in many applications.
  \end{remark}

Next, at the continuous level, we follow the approach in~\cite{BCG20,Barrios-Cascon-Gonzalez-Oseen} to derive a mixed formulation of problem $(P)$. To this end, we introduce the pseudostress tensor
$  \bsigma\defi \nu\,\nabla\uu - p\,\II$
as an auxiliary unknown. In addition, given a tensor $\btau$, we denote its deviator by
$  \btau^d\defi \btau - \frac{\tr(\btau)}{n}\,\II\,.$ 
With these definitions, problem $(P)$ can be equivalently reformulated as follows: \emph{Find a tensor field $\bsigma$ and a vector field $\uu$ such that}
\begin{equation}
  \label{eq:2.1}
  \left\{
    \begin{array}{rcrl}
      - \nabla \cdot \bsigma
      + \theta\, \uu\,
      + (\nabla \uu )\balpha
      &=&  \ff & \text{in }\OO\,,
      \\
      \dfrac{1}{\nu}\,\bsigma^d -\nabla \uu
      &=&\zero & \text{in } \OO\,,
      \\
      \uu
      &=& \gD & \text{on } \GD\,,
      \\
      \bsigma\nn
      &=& -\gN & \text{on } \GN\,.
    \end{array}
  \right.
\end{equation}
The following lemma, whose proof is straightforward, establishes the equivalence between formulations $(P)$ and \eqref{eq:2.1}.
\begin{lemma}
Let $\bsigma=\nu\,\nabla\uu-p\,\II.$ 
Then, $(\uu,p)$ is a solution of problem $(P)$ if and only if $(\bsigma,\uu)$ is a solution of problem \eqref{eq:2.1}.
Moreover, the pressure variable can be recovered from $\bsigma$ through $p=-\frac{1}{n}\,\tr(\bsigma)$.
\end{lemma}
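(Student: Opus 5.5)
The plan is to verify both implications directly, with the identity $\tr(\nabla\uu)=\nabla\cdot\uu$ as the only nontrivial ingredient. I will work in the regularity setting implicit in $(P)$: $\uu\in H^1(\OO)^n$, $p\in L^2(\OO)$, and the equations holding in the distributional sense. Then $\bsigma=\nu\nabla\uu-p\II\in L^2(\OO)^{n\times n}$, and the first equation of $(P)$ shows $\nabla\cdot\bsigma\in L^2(\OO)^n$. So $\bsigma\in\HH$ and its normal trace on $\GN$ is well defined.

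For the forward direction, assume $(\uu,p)$ solves $(P)$ and set $\bsigma\defi\nu\nabla\uu-p\II$. Substituting into the momentum equation of $(P)$ gives the first equation of \eqref{eq:2.1}. The Neumann condition $(-\nu\nabla\uu+p\II)\nn=\gN$ is exactly $-\bsigma\nn=\gN$, i.e. $\bsigma\nn=-\gN$. The Dirichlet condition carries over verbatim. For the constitutive equation, take traces: $\tr(\bsigma)=\nu\,\nabla\cdot\uu-np=-np$ by incompressibility, so $p=-\frac{1}{n}\tr(\bsigma)$. This is the recovery formula. Then $\bsigma^d=\bsigma+p\II=\nu\nabla\uu$, which yields $\frac1\nu\bsigma^d-\nabla\uu=\zero$.

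For the converse, assume $(\bsigma,\uu)$ solves \eqref{eq:2.1} and define $p\defi-\frac1n\tr(\bsigma)$. Then $\bsigma=\bsigma^d+\frac{\tr(\bsigma)}{n}\II=\nu\nabla\uu-p\II$ by the second equation, which shows that $\bsigma$ has the stated form. Taking the trace of $\frac1\nu\bsigma^d=\nabla\uu$ and using $\tr(\bsigma^d)=0$ gives $\nabla\cdot\uu=\tr(\nabla\uu)=0$. This recovers incompressibility, which \eqref{eq:2.1} does not state explicitly. The momentum equation and both boundary conditions of $(P)$ then follow by substituting $\bsigma=\nu\nabla\uu-p\II$ back into them.

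The only step needing care is this recovery of $\nabla\cdot\uu=0$ in the converse. The mass-conservation equation is absent from \eqref{eq:2.1} and is encoded entirely in the tracelessness of the deviator. Everything else is algebraic substitution, with the boundary identities understood in the sense of traces in $\HH^{1/2}_{00}(\GD)$ and normal traces in $\HH^{-1/2}_{00}(\GN)$.
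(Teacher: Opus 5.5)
Your proof is correct and is the direct verification the paper has in mind; the paper omits the proof as straightforward. You substitute $\bsigma=\nu\nabla\uu-p\II$ into the momentum equation and the Neumann condition, and use $\tr(\nabla\uu)=\nabla\cdot\uu$ together with $\tr(\bsigma^d)=0$ to pass between incompressibility and $\frac{1}{\nu}\bsigma^d=\nabla\uu$, which also gives $p=-\frac{1}{n}\tr(\bsigma)$. Your remark that $\nabla\cdot\bsigma=\theta\uu+(\nabla\uu)\balpha-\ff\in L^2(\OO)^n$, so that $\bsigma\in\HH$ and $\bsigma\nn$ is meaningful on $\GN$, is a useful detail the paper leaves implicit.
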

\begin{remark}
The previous result shows that, from the solution of the strong problem \eqref{eq:2.1} formulated in terms of the pseudostress and velocity, the pressure $p$ of the original problem $(P)$ can be recovered. Moreover, by means of a straightforward postprocessing procedure, other quantities of interest can also be obtained. In particular, since
$\bsigma^d=\nu\nabla\uu$,
the velocity gradient and the vorticity tensor are given, respectively, by
$\nabla\uu=\nu^{-1}\bsigma^d$
and
$\boldsymbol{\omega}=\dfrac{1}{2\nu}\left(\bsigma^d-(\bsigma^d)^t\right)$.
For further details, we refer to \cite{CaiTongVassilevskiWang2010}.
\end{remark}
The following lemma will be instrumental in the analysis of this work.
%The following lemma will be instrumental in the subsequent analysis. In particular, it will be used to establish the well-posedness of the discrete problem~\eqref{NitscheScheme}; see Theorem~\ref{coercivity-AN}.
%
\begin{lemma}
  \label{trilinearprop}
  Let $\ww,\vv\in\QQ$ and $\balpha \in H^1(\Omega)^n$. Then the following identity holds:
  \begin{equation}
    \label{trilinearprop-igualdad}
    \int_{\OO}(\nabla\ww)\balpha\cdot\vv
    =
    - \int_{\OO}(\nabla\vv)\balpha\cdot\ww
    - \int_{\OO}(\nabla \cdot \balpha)\ww\cdot\vv\,
    +\int_{\partial \OO} (\balpha\cdot\nn) \ww \cdot \vv\,.
  \end{equation}
  Moreover, if $\balpha \in L^{\infty}(\Omega)^n$, 
  then the following estimate holds for all $\ww\in\QQ$:
  \begin{equation}
    \label{trilinearprop-desigualdad0}
    \|(\nabla\ww)\balpha\|_{0,\Omega}^2
    \le
    n\,\|\balpha\|_{\infty,\OO}^2|\ww|_{1,\OO}^2\,.
  \end{equation}  
%  and there exists a positive constant $\Csob$, independent of $\ww$, $\vv$, and $\balpha$, such that
%  \begin{equation}
%    \label{trilinearprop-desigualdad}
%    \int_{\OO}((\nabla \cdot \balpha)\ww)\cdot\vv
%    \le
%    \Csob\, \|\nabla \cdot \balpha\|_{0,\OO}\|\ww\|_{1,\OO}\|\vv\|_{1,\OO}\,.
%  \end{equation}
\end{lemma}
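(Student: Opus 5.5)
The identity \eqref{trilinearprop-igualdad} is an integration-by-parts (divergence theorem) statement for the vector field $(\ww\cdot\vv)\balpha$, and the plan is to prove it first for smooth functions and then pass to the limit by density. Take $\ww,\vv\in C^\infty(\overline{\OO})^n$ and $\balpha\in C^\infty(\overline{\OO})^n$. The product rule gives
\[
\nabla\cdot\big((\ww\cdot\vv)\balpha\big)=(\nabla\cdot\balpha)\,\ww\cdot\vv+\balpha\cdot\nabla(\ww\cdot\vv)\,.
\]
Written componentwise,
\[
\balpha\cdot\nabla(\ww\cdot\vv)=\sum_{i,j}\alpha_j\,\partial_j(w_iv_i)=(\nabla\ww)\balpha\cdot\vv+(\nabla\vv)\balpha\cdot\ww\,,
\]
where we use the convention $((\nabla\ww)\balpha)_i=\sum_j\partial_jw_i\,\alpha_j$. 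Integrating over $\OO$ and applying the divergence theorem on the Lipschitz domain yields $\int_{\partial\OO}(\balpha\cdot\nn)\,\ww\cdot\vv$ on the left-hand side. Rearranging the terms gives \eqref{trilinearprop-igualdad} for smooth data.

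The density step is the main (mild) obstacle. We approximate $\ww,\vv$ in $H^1$ and $\balpha$ in $H^1$ by smooth functions, using that $C^\infty(\overline{\OO})$ is dense in $H^1(\OO)$ for Lipschitz $\OO$, and check that each term is continuous with respect to these approximations. The volume terms are trilinear, of the form $\int (\partial w)\,\alpha\, v$ and $\int (\partial\alpha)\, w\, v$. By H\"older's inequality with exponents $(2,4,4)$ and the Sobolev embedding $H^1(\OO)\hookrightarrow L^4(\OO)$, valid for $n\le3$, each of these terms is bounded by $C\|\ww\|_{1,\OO}\|\balpha\|_{1,\OO}\|\vv\|_{1,\OO}$. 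The boundary term is handled in the same way: the trace map sends $H^1(\OO)$ into $H^{1/2}(\partial\OO)\hookrightarrow L^4(\partial\OO)$ for $n\le3$ (the boundary has dimension $n-1\le2$). Hence the boundary term is bounded by $\|\balpha\|_{L^3(\partial\OO)}\|\ww\|_{L^3(\partial\OO)}\|\vv\|_{L^3(\partial\OO)}$, and all three factors are controlled by the corresponding $H^1(\OO)$ norms. Consequently both sides of the identity are continuous trilinear forms on $(H^1)^3$, and the identity extends from smooth functions to all $\ww,\vv\in\QQ$ and $\balpha\in H^1(\OO)^n$. In the paper's setting $\balpha\in W^{1,\infty}(\OO)^n$, so this step is in fact simpler.

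For \eqref{trilinearprop-desigualdad0}, we argue pointwise. For each component $i$, the Cauchy--Schwarz inequality in $\RR^n$ gives
\[
|((\nabla\ww)\balpha)_i|^2=\Big|\sum_j\partial_jw_i\,\alpha_j\Big|^2\le|\balpha|^2\,|\nabla w_i|^2\,.
\]
Summing over $i$ yields $|(\nabla\ww)\balpha|^2\le|\balpha|^2|\nabla\ww|^2$. Next we bound the Euclidean norm of $\balpha$ by the component-wise maximum, $|\balpha|^2\le n\max_j|\alpha_j|^2\le n\|\balpha\|_{\infty,\OO}^2$, interpreting $\|\balpha\|_{\infty,\OO}$ as the maximum of the component sup-norms; this is the source of the factor $n$. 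Integrating over $\OO$ then gives
\[
\|(\nabla\ww)\balpha\|_{0,\OO}^2\le n\,\|\balpha\|_{\infty,\OO}^2\,|\ww|_{1,\OO}^2\,,
\]
which is \eqref{trilinearprop-desigualdad0}.
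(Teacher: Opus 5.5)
Your proof is correct and follows the same route as the paper, which only invokes a straightforward integration by parts for the identity and the Cauchy--Schwarz inequality for the estimate. Your product-rule computation for $(\ww\cdot\vv)\balpha$, together with the density argument (H\"older's inequality with the Sobolev and trace embeddings valid for $n\le 3$), simply supplies the details the paper omits, and your pointwise bound $|(\nabla\ww)\balpha|^2\le|\balpha|^2|\nabla\ww|^2$ yields the stated factor $n$ whether $\|\balpha\|_{\infty,\OO}$ is read as the maximum of the componentwise sup-norms or as the supremum of the Euclidean norm.
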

\begin{proof}
Identity \eqref{trilinearprop-igualdad} follows from a straightforward integration by parts argument, while estimate \eqref{trilinearprop-desigualdad0} is an immediate consequence of the Cauchy--Schwarz inequality.
\end{proof}
By testing with suitable functions, integrating by parts, and adding appropriate residual terms from the momentum equations in \eqref{eq:2.1}, we obtain the following 
augmented mixed formulation, which is derived using arguments similar to those in \cite{BCG20,Barrios-Cascon-Gonzalez-Oseen}:
%\textcolor{red}{Llegue hasta ac\'a: Viernes 13 Julio 2026 (12:20 hrs) Ahora a comer :-)!!}
%Testing with suitable functions, integrating by parts and adding appropriate residual terms from the momentum equations on \eqref{eq:2.1}
%we (for details, see \cite{Barrios-Cascon-Gonzalez-Oseen}), we obtain the following augmented formulation: 
%., we arrive at the following dual-mixed variational formulation of problem \eqref{eq:2.1}: 
%To derive the variational formulation of \eqref{eq:2.2}, we combine the augmented formulation proposed for the Oseen problem in \cite{Barrios-Cascon-Gonzalez-Oseen} with the Nitsche-based treatment of boundary conditions proposed for the Brinkman equations in \cite{CCP26}, 
%obtaining the following augmented formulation: 
\emph{Find $(\bsigma,\uu) \in \HH \times \QQ$, with  $\bsigma\nn = -\gN$ on   $\GN$, such that}
\begin{equation}
  \label{eq:3.1} 
  A((\bsigma,\uu),(\btau,\vv))= F(\btau,\vv)\,,
\end{equation}
for all $(\btau,\vv) \in \VV \times \QQ $, where $A:(\HH \times \QQ)\times(\HH  \times \QQ) \longrightarrow \RR$ is the bilinear form defined by
\begin{align}
  \nonumber
  A\left( (\bsigma,\uu),(\btau,\vv) \right) \defi
  & \dfrac{1}{\nu}\,\int_{\OO} \bsigma^d:\btau^d
    + \int_{\OO} \uu \cdot (\nabla \cdot \btau)
    - \int_{\OO} \vv \cdot (\nabla \cdot \bsigma)
    + \theta\, \int_{\OO} \uu\cdot \vv  
    + \int_{\OO} (\nabla \uu)\balpha\cdot \vv 
  \\
  \nonumber
  & + \kappa_1\,\int_{\OO}\left(\nabla \uu- \dfrac{\bsigma^d}{\nu}\right):\left(\nabla\vv
    + \dfrac{\btau^d}{\nu}\right)
  \\
  & + \dfrac{\kappa_2}{2\theta}\,\int_{\OO} (\nabla \cdot \bsigma-\theta\, \uu-(\nabla \uu)\balpha) \cdot (\nabla \cdot \btau
    + \theta\, \vv +(\nabla \vv)\balpha) +\kappa_3 \, \int_{\GD} \uu \cdot \vv\,,
    \label{formA}
\end{align}
for all $(\bsigma,\uu),(\btau,\vv) \in \HH \times \QQ$, and $F:\HH \times \QQ \longrightarrow \RR$ is the linear form defined by
\begin{align}
  F\left(\btau,\vv\right) \defi
  & \langle \btau \nn, \gD \rangle_{\GD}
    + \int_{\OO}  \ff \cdot \vv
    - \dfrac{\kappa_2}{2\theta} \, \int_{\OO} \ff \cdot (\nabla \cdot \btau+\theta \,\vv
    +(\nabla \vv)\balpha) 
    +\kappa_3 \, \int_{\GD} \gD\cdot \vv\,,
\end{align}
for all $(\btau,\vv) \in \HH \times \QQ $, with $\kappa_1,$ $\kappa_2$ and $\kappa_3$, positive parameters which will be specified later.

The following technical results will be used to establish the coercivity of the bilinear form $A$ on the subspace $\VV\times\QQ$ of $\HH\times\QQ$, as well as the existence and uniqueness of the solution to the proposed discrete scheme.

%\textcolor{violet}{?`Ser\'ia conveniente a\~nadir un remark indicando qu\'e pasa con \eqref{formA} cuando $\theta=0$?}
\begin{lemma}
 \label{pseudocoercitivity}
 Assume that (H1)-(H2) hold and that the following conditions on the parameters are satisfied:
\begin{equation}
 \label{condicionkappa}
 0<\kappa_1<\nu\,, \qquad 0<\kappa_2<\min\left\{ 1 - \dfrac{\eps}{2\theta}, \dfrac{\theta \kappa_1}{n\|\balpha\|^2_{\infty,\OO}}\right\}\,, \qquad 
 \kappa_3 > \dfrac{\|\balpha \cdot \nn\|_{\infty,\GD}}{2}\,.
\end{equation}
Then, for all $(\btau,\vv) \in \HH \times \QQ$ there is a positive constant $C$, such that
\begin{align}
  \label{quasi-coercivity}
  A((\btau, \vv) ,(\btau, \vv) )
  \ge
  \, C \, \Bigg\{
    \dfrac{1}{\nu}\, \|\btau^d\|_{0,\OO}^2
       +\dfrac{\kappa_2}{2\theta}\,\| \nabla \cdot \btau\|^2_{0,\OO}
    + \theta\,\|\vv\|_{0,\OO}^2
    + |\vv|_{1,\OO}^2
  +\|\vv\|_{0,\GD}^2 \Bigg\}\,.
\end{align}
Moreover, there exists another positive constant $C$, depending on the physical parameters and the domain $\OO$, such that
 \begin{equation}
   \label{coercivityVQ}
  A((\btau, \vv) ,(\btau, \vv) )
  \ge
  \, C \, \Bigg\{
\|\btau\|_{\ddiv,\OO}^2
    + \|\vv\|_{1,\OO}^2
  +\|\vv\|_{0,\GD}^2 \Bigg\}\,,
\end{equation}
 for all  $(\btau,\vv) \in \VV \times \QQ$.
\end{lemma}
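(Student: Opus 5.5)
Setting $(\bsigma,\uu)=(\btau,\vv)$ in \eqref{formA}, the terms $\int_\OO \vv\cdot(\nabla\cdot\btau)$ cancel. What remains is
\begin{align*}
A((\btau,\vv),(\btau,\vv)) =\;& \frac{1}{\nu}\|\btau^d\|_{0,\OO}^2+\theta\|\vv\|_{0,\OO}^2+\int_\OO(\nabla\vv)\balpha\cdot\vv
+\kappa_1\Big(|\vv|_{1,\OO}^2-\frac{1}{\nu^2}\|\btau^d\|_{0,\OO}^2\Big)\\
&+\frac{\kappa_2}{2\theta}\Big(\|\nabla\cdot\btau\|_{0,\OO}^2-\|\theta\vv+(\nabla\vv)\balpha\|_{0,\OO}^2\Big)+\kappa_3\|\vv\|_{0,\GD}^2 ,
\end{align*}
because $(\nabla\vv-\btau^d/\nu):(\nabla\vv+\btau^d/\nu)$ and $(\nabla\cdot\btau-\theta\vv-(\nabla\vv)\balpha)\cdot(\nabla\cdot\btau+\theta\vv+(\nabla\vv)\balpha)$ are differences of squares. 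We treat these groups one at a time.

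For the convective term, apply Lemma~\ref{trilinearprop} with $\ww=\vv$. This gives
\[
\int_\OO(\nabla\vv)\balpha\cdot\vv=-\tfrac12\int_\OO(\nabla\cdot\balpha)|\vv|^2+\tfrac12\int_{\partial\OO}(\balpha\cdot\nn)|\vv|^2 .
\]
By (H2) the $\GN$ part of the boundary integral is nonnegative and we drop it. The $\GD$ part is bounded below by $-\tfrac12\|\balpha\cdot\nn\|_{\infty,\GD}\|\vv\|_{0,\GD}^2$, and $\kappa_3$ absorbs it, leaving a positive coefficient $\kappa_3-\tfrac12\|\balpha\cdot\nn\|_{\infty,\GD}$. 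By (H1) the volume part is at least $-\tfrac{\eps}{2}\|\vv\|_{0,\OO}^2$.

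For the $\kappa_2$ term, expand
\[
\|\theta\vv+(\nabla\vv)\balpha\|_{0,\OO}^2\le 2\theta^2\|\vv\|_{0,\OO}^2+2\|(\nabla\vv)\balpha\|_{0,\OO}^2 .
\]
This crude bound would cost $\kappa_2\theta\|\vv\|_{0,\OO}^2$ and is the delicate point. The stated condition $\kappa_2<1-\eps/(2\theta)$ suggests keeping the cross term exactly instead: it equals $\theta\int_\OO(\nabla\vv)\balpha\cdot\vv$, which we handle again by Lemma~\ref{trilinearprop}. Then
\[
\frac{\kappa_2}{2\theta}\|\theta\vv+(\nabla\vv)\balpha\|_{0,\OO}^2=\frac{\kappa_2\theta}{2}\|\vv\|_{0,\OO}^2+\kappa_2\int_\OO(\nabla\vv)\balpha\cdot\vv+\frac{\kappa_2}{2\theta}\|(\nabla\vv)\balpha\|_{0,\OO}^2 .
\]
The middle term combines with the convective term to give $(1-\kappa_2)\int_\OO(\nabla\vv)\balpha\cdot\vv$, which the previous paragraph bounds below. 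Estimate \eqref{trilinearprop-desigualdad0} gives
\[
\frac{\kappa_2}{2\theta}\|(\nabla\vv)\balpha\|_{0,\OO}^2\le\frac{\kappa_2 n\|\balpha\|_{\infty,\OO}^2}{2\theta}|\vv|_{1,\OO}^2<\frac{\kappa_1}{2}|\vv|_{1,\OO}^2
\]
by the bound on $\kappa_2$, so the coefficient of $|\vv|_{1,\OO}^2$ stays at least $\kappa_1/2>0$. The coefficient of $\|\vv\|_{0,\OO}^2$ becomes $\theta(1-\kappa_2/2)-(1-\kappa_2)\eps/2$. This is positive under $\kappa_2<1-\eps/(2\theta)$ together with smallness of $\eps$ from (H1). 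Finally, $\kappa_1<\nu$ makes $\frac1\nu(1-\kappa_1/\nu)\|\btau^d\|_{0,\OO}^2$ positive, and the divergence term $\frac{\kappa_2}{2\theta}\|\nabla\cdot\btau\|_{0,\OO}^2$ appears with full weight. Taking the minimum of all coefficients gives \eqref{quasi-coercivity}. Tracking the precise $\theta$-weighted constants is where I expect the main bookkeeping obstacle.

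For \eqref{coercivityVQ}, it remains to control $\|\btau\|_{0,\OO}$ on $\VV$. We use the standard decomposition $\btau=\btau_0+c\II$ with $\int_\OO\tr\btau_0=0$, together with the known estimate
\[
\|\btau_0\|_{0,\OO}^2\le C\big(\|\btau^d\|_{0,\OO}^2+\|\nabla\cdot\btau\|_{0,\OO}^2\big).
\]
We then bound $c$ by the boundary condition on $\GN$: since $\GN$ is nonempty, $\btau\nn=\zero$ on $\GN$ forces $c\II\nn=-\btau_0\nn$ there. Equivalently, $\|\btau\|_{0,\OO}^2\le C(\|\btau^d\|_{0,\OO}^2+\|\nabla\cdot\btau\|_{0,\OO}^2)$ holds for all $\btau\in\VV$, which follows by a compactness/contradiction argument because $\II\notin\VV$. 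This lemma is the main nontrivial ingredient of the second part. With it, adding $\theta\|\vv\|_{0,\OO}^2+|\vv|_{1,\OO}^2\gtrsim\|\vv\|_{1,\OO}^2$ concludes.
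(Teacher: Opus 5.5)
Your argument is correct, but in the first part it takes a different (and sharper) route than the paper.

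The paper uses precisely what you called the crude bound, $\|\theta\vv+(\nabla\vv)\balpha\|_{0,\OO}^2\le 2\theta^2\|\vv\|_{0,\OO}^2+2n\|\balpha\|_{\infty,\OO}^2|\vv|_{1,\OO}^2$. It applies Lemma~\ref{trilinearprop} only to the convective term with coefficient $1$. This produces the coefficients $\theta(1-\kappa_2)-\eps/2$ for $\|\vv\|_{0,\OO}^2$ and $\kappa_1-\kappa_2 n\|\balpha\|_{\infty,\OO}^2/\theta$ for $|\vv|_{1,\OO}^2$. Their positivity is exactly the restriction on $\kappa_2$ in \eqref{condicionkappa}. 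So that step is not delicate: the hypotheses were calibrated for it.

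Your exact expansion merges the cross term into $(1-\kappa_2)\int_\OO(\nabla\vv)\balpha\cdot\vv$. It yields larger coefficients, namely $(1-\kappa_2)(\theta-\eps/2)+\kappa_2\theta/2$ for $\|\vv\|_{0,\OO}^2$ and more than $\kappa_1/2$ for $|\vv|_{1,\OO}^2$. It would even tolerate a somewhat larger $\kappa_2$. The price is that you need $1-\kappa_2>0$, which follows from $\kappa_2<1-\eps/(2\theta)$, so that the $\GN$ boundary term keeps the sign given by (H2). You should state this explicitly. Also, no extra smallness of $\eps$ is needed, since $0<\kappa_2<1-\eps/(2\theta)$ already forces $\eps<2\theta$.

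For \eqref{coercivityVQ} the paper simply invokes \cite[Lemma~4]{GaticaGaticaMarquez2014}. Your decomposition $\btau=\btau_0+c\II$, with $c$ controlled through $\btau\nn=\zero$ on $\GN$ (or the contradiction argument based on $\II\notin\VV$), is essentially a proof of that cited lemma. So there the two approaches coincide.
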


%
% Proof 
%

\begin{proof}
%The estimate   \eqref{quasi-coercivity} follows from a direct calculation. To this end, 
Let $(\btau,\vv)\in\HH\times\QQ$. Then, using the definition of the bilinear form $A(\cdot,\cdot)$ (cf. \eqref{formA}), 
H\"older and 
Young's inequalities, (H1)-(H2) and Lemma \ref{trilinearprop}, we obtain
\begin{align}
  \nonumber
 & A((\btau, \vv) ,(\btau, \vv) )
    \\
  \nonumber
  =&\,
     \dfrac{1}{\nu}\,\|\btau^d\|_{0,\OO}^2
     + \theta\,\|\vv\|_{0,\OO}^2
     + \int_{\OO}(\nabla\vv)\balpha\cdot\vv
     + \kappa_1\,
     \left(
     |\vv|_{1,\OO}^2 - \dfrac{1}{\nu^2}\, \|\btau^d\|_{0,\OO}^2
     \right)+
  \\
  \nonumber
  &\,
     \dfrac{\kappa_2}{2\theta}\,
    \left(
    \| \nabla \cdot \btau\|^2_{0,\OO} - \|\theta\,\vv+(\nabla\vv)\balpha\|^2_{0,\OO}
    \right)
    + \kappa_3\,\| \vv\|_{0,\GD}^2
  \\
  \nonumber
  \ge
  &\,
    \dfrac{1}{\nu}\left(1-\dfrac{\kappa_1}{\nu}\right)\|\btau^d\|_{0,\OO}^2
        +\dfrac{\kappa_2}{2\theta}\,\| \nabla \cdot \btau\|^2_{0,\OO}
    + \theta(1-\kappa_2)\,\|\vv\|_{0,\OO}^2- \medio\,
    \int_{\OO}(\nabla\cdot \balpha) \vv \cdot\vv+\\
  \nonumber
  &\, \medio \,\int_{\GD} (\balpha\cdot \nn) \vv\cdot \vv +\left( \kappa_1-\dfrac{\kappa_2\,n\|\balpha\|_{\infty,\OO}^2}{\theta}\right)\, |\vv|_{1,\Omega}^2
  +\kappa_3\,\|\vv\|_{0,\GD}^2\\
    \ge  \nonumber
  &\,
    \dfrac{1}{\nu}\left(1-\dfrac{\kappa_1}{\nu}\right)\|\btau^d\|_{0,\OO}^2
        +\dfrac{\kappa_2}{2\theta}\,\| \nabla \cdot \btau\|^2_{0,\OO}
    + \theta\left(1-\kappa_2-\dfrac{\eps}{2\theta}\right)\,\|\vv\|_{0,\OO}^2+\\
    %- \medio\,
    %\int_{\OO}(\nabla\cdot \balpha) \vv \cdot\vv +\\
  \nonumber
  &\, \left( \kappa_1-\dfrac{\kappa_2\,n\|\balpha\|_{\infty,\OO}^2}{\theta}\right)\, |\vv|_{1,\Omega}^2
  +\left( \kappa_3 - \dfrac{\|\balpha\cdot \nn\|_{\infty,\GD}}{2}\right)\,\|\vv\|_{0,\GD}^2\,,
  %
  % Final
  %
%  \ge&\,  \, C \, \Bigg\{
%    \dfrac{1}{\nu}\, \|\btau^d\|_{0,\OO}^2
%       +\dfrac{\kappa_2}{2\theta}\,\| \nabla \cdot \btau\|^2_{0,\OO}
%    + \theta\,\|\vv\|_{0,\OO}^2
%    + |\vv|_{1,\OO}^2
%  +\|\vv\|_{0,\GD}^2 \Bigg\}.   \nonumber
\end{align}
and the estimate~\eqref{quasi-coercivity} follows from the conditions on the parameters given in~\eqref{condicionkappa}. Moreover, the second lower bound~\eqref{coercivityVQ} follows by restricting the previous estimate to $\VV\times\QQ$ and applying~\cite[Lemma~4]{GaticaGaticaMarquez2014}.
\end{proof}
%
%
%

%\newpage

%
%It is also easy to prove, using \cite[Problem~(23)]{BaBeBu25}, that for
%$0 < \kappa_1 < \nu$, $\kappa_2\in (0,1/\alpha)$ and $\kappa_3 > 0$,  the problem~\eqref{eq:3.1} admits a unique solution $(\bsigma, \uu) \in \HH \times \textcolor{red}{\QQ}$.
%\textcolor{red}{
%\begin{lemma}
%If $0 < \kappa_1 < \nu$, $\kappa_2\in (0,1/\alpha)$ and $\kappa_3 \ge 0$, then problem~\eqref{eq:3.1} admits a unique solution $(\bsigma, \uu) \in \HH \times \QQ$.
%\end{lemma}
%%
%%
%%
%\begin{proof}
%The conclusion follows from arguments similar to those in Lemma~\ref{lemma1}, together with the equivalence between problem~\eqref{eq:3.1} and \cite[Problem~(23)]{BaBeBu25},  and \cite[Theorem~4]{BaBeBu25}.
%\end{proof}
%}
The following result guarantees the well-posedness of the variational formulation \eqref{eq:3.1} by means of standard lifting and homogenization arguments. For completeness, we provide the details.
\begin{lemma}
Assume the assumption ~\ref{pseudocoercitivity}. Then the variational formulation \eqref{eq:3.1} is well-posed.
\end{lemma}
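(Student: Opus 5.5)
The approach is the classical lifting-and-homogenization argument, combined with the Lax--Milgram lemma on $\VV\times\QQ$.

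\textbf{Step 1 (lifting of the Neumann datum).} Given $\gN\in\HH_{00}^{-1/2}(\GN)$, we construct $\bsigma_{\mathrm N}\in\HH$ with $\bsigma_{\mathrm N}\nn=-\gN$ on $\GN$ and
\[
\|\bsigma_{\mathrm N}\|_{\ddiv,\OO}\le C\,\|\gN\|_{-1/2,00,\GN}.
\]
The plan is to take $\bsigma_{\mathrm N}\defi\nabla\zz$, where $\zz\in\HH^1_{\GD}(\OO)$ is the unique weak solution of the auxiliary vector problem
\[
-\Delta\zz+\zz=\zero \ \text{in }\OO,\qquad \zz=\zero \ \text{on }\GD,\qquad \nabla\zz\,\nn=-\gN \ \text{on }\GN.
\]
Lax--Milgram on $\HH^1_{\GD}(\OO)$ gives $\zz$ together with $\|\zz\|_{1,\OO}\le C\|\gN\|$, since $\gN$ pairs continuously with traces of $\HH^1_{\GD}(\OO)$ functions by the definition of $\HH_{00}^{1/2}(\GN)$. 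Moreover $\nabla\cdot\nabla\zz=\zz\in L^2(\OO)^n$, so $\nabla\zz\in\HH$. The normal trace on $\GN$ equals $-\gN$ in the sense of $\HH_{00}^{-1/2}(\GN)$, which follows from the weak formulation and the generalized Green formula in $\HH$.

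\textbf{Step 2 (homogeneous problem).} Write $\bsigma=\bsigma_0+\bsigma_{\mathrm N}$ with $\bsigma_0\in\VV$. Problem \eqref{eq:3.1} becomes: find $(\bsigma_0,\uu)\in\VV\times\QQ$ such that
\[
A((\bsigma_0,\uu),(\btau,\vv))=F(\btau,\vv)-A((\bsigma_{\mathrm N},\zero),(\btau,\vv))\qquad\forall\,(\btau,\vv)\in\VV\times\QQ.
\]
We then verify the hypotheses of Lax--Milgram.
\begin{itemize}
\item \emph{Boundedness of $A$ on $(\HH\times\QQ)^2$.} Each term is controlled by Cauchy--Schwarz. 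The convective terms are handled with \eqref{trilinearprop-desigualdad0} and $\balpha\in W^{1,\infty}(\OO)^n$. The term $\kappa_3\int_{\GD}\uu\cdot\vv$ is handled with the trace inequality $\|\vv\|_{0,\GD}\le C\|\vv\|_{1,\OO}$.
\item \emph{Boundedness of $F$.} The terms involving $\ff$ and $\kappa_3\gD$ are immediate. The pairing $\langle\btau\nn,\gD\rangle_{\GD}$ is bounded for $\btau\in\VV$, because $\btau\nn=\zero$ on $\GN$: extending $\gD$ by zero on $\GN$ yields an element of $H^{1/2}(\Gamma)^n$, thanks to $\gD\in\HH_{00}^{1/2}(\GD)$. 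Hence $|\langle\btau\nn,\gD\rangle_{\GD}|\le C\|\btau\|_{\ddiv,\OO}\|\gD\|_{1/2,00,\GD}$.
\item \emph{Coercivity.} Under \eqref{condicionkappa}, ellipticity on $\VV\times\QQ$ is exactly \eqref{coercivityVQ} of Lemma~\ref{pseudocoercitivity}.
\end{itemize}
Lax--Milgram then gives a unique $(\bsigma_0,\uu)$, together with the a priori bound
\[
\|\bsigma_0\|_{\ddiv,\OO}+\|\uu\|_{1,\OO}\le C\big(\|\ff\|_{0,\OO}+\|\gD\|_{1/2,00,\GD}+\|\gN\|_{-1/2,00,\GN}\big).
\]

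\textbf{Step 3 (independence of the lifting and uniqueness).} The pair $(\bsigma,\uu)=(\bsigma_0+\bsigma_{\mathrm N},\uu)$ solves \eqref{eq:3.1}, and it satisfies the continuous-dependence estimate by the triangle inequality. For uniqueness, the difference of two solutions lies in $\VV\times\QQ$ and solves the homogeneous problem, so it vanishes by \eqref{coercivityVQ}. In particular, the solution does not depend on the chosen lifting.

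The main obstacle is Step 1 in the Lions--Magenes setting: one must justify that the weak Neumann condition for $\zz$ really identifies the normal trace of $\nabla\zz$ on $\GN$ with $-\gN$ as an element of $\HH_{00}^{-1/2}(\GN)$. The same duality bookkeeping is needed for the continuity of $\langle\btau\nn,\gD\rangle_{\GD}$ on $\VV$. I would handle both via the Green formula
\[
\langle\btau\nn,\vv\rangle_{\Gamma}=\int_\OO\btau:\nabla\vv+\int_\OO\vv\cdot(\nabla\cdot\btau),
\]
tested with $\vv\in\HH^1_{\GD}(\OO)$ in the first case and with $\vv\in\HH^1_{\GN}(\OO)$ in the second, citing \cite{SayasBrownHassell2020}.
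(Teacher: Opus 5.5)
Your proposal is correct and follows the same route as the paper: you lift the Neumann datum through the gradient of the solution of an auxiliary mixed elliptic problem on $\HH^1_{\GD}(\OO)$, reduce to a homogeneous problem on $\VV\times\QQ$, and conclude with Lax--Milgram via \eqref{coercivityVQ}. The differences are cosmetic: the paper uses the harmonic, divergence-free lifting $-\nabla\zz$ with $-\Delta\zz=\zero$, whereas you use $-\Delta\zz+\zz=\zero$. The one substantive point in your favour is that you correctly move $-A((\bsigma_{\mathrm N},\zero),(\btau,\vv))$ to the right-hand side, a term the paper's homogenized formulation \eqref{eq:2.7b} omits.
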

\begin{proof}
We proceed as usual (see, for instance, \cite{BaBeBu25,CCHP26}) by considering a lifting of the Neumann datum $\gN$. Let $\bsigma_g\defi -\nabla\zz$, where $\zz\in\HH^1_{\GD}(\OO)$ is the unique solution of the problem:
$-\Delta\zz=\zero$ in $\OO$, $\zz=\zero$ on $\GD$, and $(\nabla\zz)\nn=\gN$ on $\GN$. It is clear that $\bsigma_h\in \HH$, $\nabla \cdot
\bsigma_h=0$ in $\OO$ and $\bsigma_h\nn=-\gN$ on $\GN$.
If we now define $\widetilde{\bsigma}\defi \bsigma-\bsigma_h$, then, from the strong formulation \eqref{eq:2.1}, we obtain the following problem in which the Neumann boundary condition has been homogenized:
\begin{equation*}
  \left\{
    \begin{array}{rcrl}
      - \nabla \cdot \widetilde{\bsigma}
      + \theta\, \uu\,
      + (\nabla \uu )\balpha
      &=&  \ff & \text{in }\OO\,,
      \\
      \dfrac{1}{\nu}\,\widetilde{\bsigma}^d -\nabla \uu
      &=&-  \dfrac{1}{\nu}\,\bsigma_h^d& \text{in } \OO\,,
      \\
      \uu
      &=& \gD & \text{on } \GD\,,
      \\
      \widetilde{\bsigma}\nn
      &=& \zero & \text{on } \GN\,.
    \end{array}
  \right.
\end{equation*}
A weak formulation of this problem is given by: \emph{Find $(\widetilde{\bsigma},\uu) \in \VV \times \QQ$, such that}
\begin{equation}
  \label{eq:2.7b} 
  A((\widetilde{\bsigma},\uu),(\btau,\vv))= F(\btau,\vv)\,,
\end{equation}
for all $(\btau,\vv)\in\VV\times\QQ$.
In view of the coercivity estimate~\eqref{coercivityVQ} established in Lemma~\ref{pseudocoercitivity}, the well-posedness of problem~\eqref{eq:2.7b} follows from the Lax--Milgram theorem.  Because problems \eqref{eq:3.1} and \eqref{eq:2.7b} are equivalent, we conclude that \eqref{eq:3.1} admits a unique solution $(\bsigma,\uu)\in\HH\times\QQ$ satisfying $\bsigma\nn=-\gN$ on $\GN$.
\end{proof}

\section{A {skew-symmetric} Nitsche scheme}
\label{Nitsche}
%
%
%

%With the aim of defining the discrete scheme, we assume for simplicity that the Neumann datum satisfies $\gN\in L^2(\GN)^n$. Consequently, the duality pairing between $\HH_{00}^{-1/2}(\GN)$ and $\HH_{00}^{1/2}(\GN)$ can be replaced by the $L^2(\GN)^n$ inner product.

Let $\{\mathcal{T}_h\}_{h>0}$ be a shape-regular family of partitions of
$\overline{\Omega}$ into triangles for $n=2$ and tetrahedra for $n=3$. Let
$h_T$ denote the diameter of an element $T\in\mathcal{T}_h$, and define
$\displaystyle h \defi \max_{T\in\mathcal{T}_h} h_T$.
Moreover, let $E$ be an edge for $n=2$ or a face for $n=3$, and $\EE_N$ denote the set of all edges/faces of $\T_h$ lying on  $\GN$.
  For  $T\in\T_h$, we define the local tensor-valued Raviart--Thomas space of order $k\geq 0$ by
\[
\tRT{k}(T)
\defi
\big\{
\btau\in L(T)^{n\times n}:
\btau_i\in\RT{k}(T),\;\forall i\in\{1,\dots ,n\}
\big\}\,,
\]
where $\btau_i$ denotes the $i$-th row of the tensor $\btau$, and
$
\RT{k}(T)
\defi
\PP_k(T)^n\oplus \xx\,\PP_k(T)
$
is the classical Raviart--Thomas space of order $k$ on $T$ (see, e.g.,~\cite[Chapter~3]{D08} and \cite[Section~3.3]{Gatica2014}). Here, $\PP_k(\calO)$ denotes the space of polynomials of total degree at most $k$ on $\calO\subset\RR^n$.

The finite element subspaces of $\HH$ and $\QQ$ employed in this work are defined, respectively, by:
\begin{align}
\label{discretespace1}
  \HH_{h,k}
  \defi
  & \left\{
  \btau_h\in \HH \;:\;
  \btau_h|_T\in\tRT{k}(T),\quad \forall T\in\T_h
  \right\},
  \\
  \label{discretespace2}
  \QQ_{h,k}
  \defi
  & \left\{
  \vv_h \in C(\overline{\OO})^n  \;:\;
  \vv_h|_T \in  \PP_{k+1}(T)^n, \quad  \forall T\in\T_h
  \right\},
\end{align}
with $k\geq 0$,
and we equip the product space $\HH_{h,k} \times \QQ_{h,k} $ with the norm:
\begin{equation}
%    \nonumber 
%    && \nh{(\btau_h,\vv_h)}
%    \\
    \label{norm}
 \nh{(\btau_h,\vv_h)}   \defi \left\{ 
    \dfrac{1}{\nu}\, \|\btau_h^d\|_{0,\OO}^2
    + \left\| \nabla \cdot \btau_h\right\|^2_{0,\OO}
    + \sum_{E\in \EE_N} \gamma_N h_E \, \| \btau_h \nn\|^2_{0,E}
    %% + |\vv_h|_{1,\OO}^2
    %% + \|\vv_h\|_{0,\OO}^2
    + \|\vv_h\|_{1,\OO}^2
    + \| \vv_h\|_{0,\GD}^2 
    \right\}^{1/2}.
%    \label{norma}
\end{equation}

To formulate the discrete scheme, we assume, for simplicity, that the Neumann datum satisfies $\gN\in L^2(\GN)^n$. Under this assumption, the duality pairing involving $\gN\in\HH_{00}^{-1/2}(\GN)$ and functions in $\HH_{00}^{1/2}(\GN)$ coincides with the $L^2(\GN)^n$ inner product.
We now introduce the following discrete scheme, incorporating {skew-symmetric} Nitsche's terms to impose the essential boundary conditions of problem \eqref{eq:2.1}: \emph{Find $(\bsigma_h,\uu_h) \in \HH_{h,k} \times \QQ_{h,k}$  such that }
\begin{equation}
 \label{NitscheScheme} 
 A_N((\bsigma_h,\uu_h),(\btau_h,\vv_h))=  F_N(\btau_h,\vv_h)\,,
 \end{equation}
 for all $(\btau_h,\vv_h) \in \HH_{h,k} \times \QQ_{h,k} $, where   $A_N:(\HH_{h,k} \times \QQ_{h,k} )\times(\HH_{h,k} \times \QQ_{h,k}) \longrightarrow \RR$ is the bilinear form defined by
\begin{align}
\nonumber
 A_N\left( (\bsigma_h,\uu_h),(\btau_h,\vv_h) \right) \defi &  A\left( (\bsigma_h,\uu_h),(\btau_h,\vv_h) \right)
 % \dfrac{1}{\nu}\,\displaystyle \int_{\OO} \bsigma_h^d:\btau_h^d +  \int_{\OO} \uu_h \cdot (\nabla \cdot \btau_h) -
 %\displaystyle   \int_{\OO} \vv_h \cdot (\nabla \cdot \bsigma_h) +\\ \nonumber
 %&\, \kappa_1 \,  \displaystyle\int_{\OO} \left(\nabla \uu_h- \dfrac{\bsigma_h^d}{\nu}\right):\left(\nabla\vv_h -\dfrac{\btau_h^d}{\nu}\right)+
  %\kappa_2 \,   \int_{\OO} (\nabla \cdot \bsigma_h) \cdot (\nabla \cdot \btau_h)    +\kappa_3 \, \int_{\GD} \uu_h\cdot \vv_h ,\\
  - \displaystyle \underbrace{\int_{\GN} \btau_h \nn\cdot \uu_h}_{\text{{Consistency} %Consistence 
term}}  
  +\\ 
 &\,   \underbrace{\int_{\GN} \bsigma_h \nn\cdot \vv_h}_{\text{{Skew-symmetric} term}}  +
 \underbrace{ \sum_{E\in \EE_N} \gamma_N h_E \int_{E} ( \bsigma_h \nn)\cdot  (\btau_h \nn)}_{\text{Penalty term}}\,,
  \label{formAN}
\end{align}
for all $(\bsigma_h,\uu_h),(\btau_h,\vv_h) \in \HH_{h,k} \times \QQ_{h,k} $, and $F: \HH_{h,k} \times \QQ_{h,k}  \longrightarrow \RR$ is the linear form defined by
\begin{align}
\nonumber
  F_N\left(\btau_h,\vv_h\right) \defi & \, 
    F\left(\btau_h,\vv_h\right) 
%  \displaystyle\int_{\GD} \btau_h \nn\cdot \ggg +\displaystyle\int_{\OO}  \ff \cdot \vv_h- \kappa_2 \,   \int_{\OO} \ff \cdot (\nabla \cdot \btau_h) 
%      +\kappa_3 \, \int_{\GD} \ggg\cdot \vv_h 
     -\int_{\GN} \gN\cdot \vv_h -
  \sum_{E\in \EE_N} \gamma_N h_E \int_{E} \gN \cdot  (\btau_h \nn)  \,,
\end{align}
for all $(\btau_h,\vv_h) \in \HH_{h,k} \times \QQ_{h,k}  $, with $\kappa_1,$ $\kappa_2$ and $\kappa_3$ positive parameters that satisfy the conditions \eqref{condicionkappa} 
%\franz{(I removed the theta here)}
and $\gamma_N>0$ being {the} Nitsche parameter.

The following result guarantees the existence and uniqueness of a solution to the discrete problem~\eqref{NitscheScheme}.

\begin{theorem} 
\label{stability}
Under the assumptions of Lemma \ref{pseudocoercitivity}, problem~\eqref{NitscheScheme} admits a unique solution $(\bsigma_h,\uu_h)\in\HH_{h,k}\times\QQ_{h,k}$.
\end{theorem}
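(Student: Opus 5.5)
Since \eqref{NitscheScheme} is a square linear system on the finite-dimensional space $\HH_{h,k}\times\QQ_{h,k}$, existence and uniqueness are equivalent. It therefore suffices to show that the homogeneous problem has only the trivial solution, or equivalently that $A_N$ is positive definite on $\HH_{h,k}\times\QQ_{h,k}$. We never need a mesh-uniform constant here; that question belongs to the later stability analysis.

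The key step is to test with $(\btau_h,\vv_h)=(\bsigma_h,\uu_h)$. In \eqref{formAN} the consistency term $-\int_{\GN}\bsigma_h\nn\cdot\uu_h$ and the skew-symmetric term $+\int_{\GN}\bsigma_h\nn\cdot\uu_h$ cancel exactly, which gives
\begin{equation*}
A_N((\bsigma_h,\uu_h),(\bsigma_h,\uu_h)) = A((\bsigma_h,\uu_h),(\bsigma_h,\uu_h)) + \sum_{E\in\EE_N}\gamma_N h_E\,\|\bsigma_h\nn\|_{0,E}^2 .
\end{equation*}
Because $\HH_{h,k}\subset\HH$ and $\QQ_{h,k}\subset\QQ$, the estimate \eqref{quasi-coercivity} of Lemma~\ref{pseudocoercitivity} applies. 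It holds on all of $\HH\times\QQ$, with no boundary condition on $\GN$, and this is the point to check carefully: in the proof of the lemma the boundary integral from Lemma~\ref{trilinearprop} splits into a $\GD$ part, absorbed by $\kappa_3$, and a $\GN$ part, nonnegative by (H2). With this we obtain
\begin{equation*}
A_N((\bsigma_h,\uu_h),(\bsigma_h,\uu_h)) \ge C\Big\{\tfrac1\nu\|\bsigma_h^d\|_{0,\OO}^2+\tfrac{\kappa_2}{2\theta}\|\nabla\cdot\bsigma_h\|_{0,\OO}^2+\theta\|\uu_h\|_{0,\OO}^2+|\uu_h|_{1,\OO}^2+\|\uu_h\|_{0,\GD}^2\Big\} + \sum_{E\in\EE_N}\gamma_N h_E\|\bsigma_h\nn\|_{0,E}^2 .
\end{equation*}

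Now suppose the left-hand side vanishes. Then $\uu_h=\zero$, $\bsigma_h^d=\zero$, $\nabla\cdot\bsigma_h=\zero$ and $\bsigma_h\nn=\zero$ on $\GN$, so $\bsigma_h\in\VV$. The main obstacle is to conclude $\bsigma_h=\zero$, since the bracket controls only the deviatoric part and not the trace. Here I would invoke \cite[Lemma~4]{GaticaGaticaMarquez2014}, exactly as in the derivation of \eqref{coercivityVQ}. That result gives $\|\btau\|_{0,\OO}^2\le C(\|\btau^d\|_{0,\OO}^2+\|\nabla\cdot\btau\|_{0,\OO}^2)$ for $\btau\in\VV$, which is available because $\GN\neq\emptyset$. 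Applied to $\bsigma_h$, it yields $\bsigma_h=\zero$.

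Alternatively, one can argue directly: $\bsigma_h^d=\zero$ means $\bsigma_h=q\II$, and $\nabla\cdot\bsigma_h=\nabla q=\zero$ makes $q$ constant. Then $\bsigma_h\nn=q\nn=\zero$ on the nonempty $\GN$ forces $q=0$. Either way the kernel is trivial, and the discrete problem has a unique solution.

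One could even record the bound $A_N((\btau_h,\vv_h),(\btau_h,\vv_h))\ge C\nhc{(\btau_h,\vv_h)}$ in the norm \eqref{norm}, again using \cite[Lemma~4]{GaticaGaticaMarquez2014} on $\VV$. However, for a general $\btau_h$ the normal trace on $\GN$ does not vanish, so making that bound $h$-uniform needs more work. That is not required for the present theorem.
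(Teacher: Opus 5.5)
Your proof is correct and uses the same computation as the paper. You test with the discrete solution itself, the consistency and skew-symmetric terms cancel, and \eqref{quasi-coercivity} plus the penalty term bounds $A_N$ from below; \eqref{quasi-coercivity} does hold on all of $\HH\times\QQ$, for exactly the reason you check. The paper reads this lower bound directly as $A_N((\btau_h,\vv_h),(\btau_h,\vv_h))\ge C_A\,\nh{(\btau_h,\vv_h)}^2$ and stops. You instead conclude by showing the kernel is trivial. Your elementary argument ($\bsigma_h=q\II$ with $q$ constant, then $q=0$ because $\GN\neq\emptyset$) is precisely what shows that \eqref{norm} is a genuine norm on $\HH_{h,k}\times\QQ_{h,k}$, a point the paper leaves implicit.

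Your closing paragraph, however, is wrong. The norm \eqref{norm} involves $\btau_h$ only through $\nu^{-1}\|\btau_h^d\|_{0,\OO}^2$, $\|\nabla\cdot\btau_h\|_{0,\OO}^2$ and the weighted normal traces on $\GN$, and never through $\|\btau_h\|_{0,\OO}$. Hence the lower bound you already display gives $A_N((\btau_h,\vv_h),(\btau_h,\vv_h))\ge C\,\nh{(\btau_h,\vv_h)}^2$ with $C$ independent of $h$. This needs neither \cite[Lemma~4]{GaticaGaticaMarquez2014} nor $\btau_h\in\VV$. That $h$-uniform bound is the whole of the paper's proof, and it is reused in the proof of Theorem~\ref{main}. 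The obstruction you have in mind concerns only the full $H(\mathrm{div})$ norm. For $(\btau_h,\vv_h)=(\II,\zero)\in\HH_{h,k}\times\QQ_{h,k}$ one has $A_N((\II,\zero),(\II,\zero))=\gamma_N\sum_{E\in\EE_N}h_E|E|=\mathcal{O}(h)$, while $\|\II\|_{\ddiv,\OO}^2=n|\OO|$. So coercivity in that norm genuinely degenerates, but this is not the norm used here.
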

%
% Proof
%
\begin{proof}
Using the definition of the bilinear form $A_N(\cdot,\cdot)$ and the norm $\nh{(\cdot,\cdot)}$ (cf. \eqref{formAN} and \eqref{norm}, respectively), together with Lemma~\ref{pseudocoercitivity}, there exists a positive constant $C_A \defi \min\left\{ \dfrac{\kappa_2}{2\theta},\theta \right\}$ such that
\begin{equation}
\label{constant-C}
 A_N((\btau_h,\vv_h),(\btau_h,\vv_h))
 \ge C_A\,\nh{(\btau_h,\vv_h)}^2\,,
\end{equation}
for all $(\btau_h,\vv_h)\in\HH_{h,k}\times\QQ_{h,k}$, which proves the result.
\end{proof}
\begin{remark}
 In the case of homogeneous Dirichlet data, $\gD=\zero$, we obtain the stability estimate
 \[ \nh{(\bsigma_h,\uu_h)} \le C\, \left\{ \|\ff\|_{0,\OO}^2 + \|\gN\|_{0,\GN}^2+
 \sum_{E\in \EE_N} \gamma_N h_E\, \| \gN\|_{0,E}^2  \right\}^{1/2},\]
 where the constant $C$ depends on $\theta$ and $\|\balpha\|_{\infty,\OO}$, but is independent of the viscosity $\nu$.
For nonhomogeneous Dirichlet data, the present theoretical framework does not provide a stability estimate for the Nitsche scheme. Establishing such an estimate requires additional arguments and may therefore constitute an interesting direction for future work.
\end{remark}
%
%
%
%
%\begin{remark}
%Since the existence and uniqueness result relies only on the coercivity of the bilinear form
%$A_N(\cdot,\cdot)$, it can be readily extended to other finite element subspaces of
%$\HH\times\QQ$. For instance, one may employ Brezzi--Douglas--Marini
%($\mathbb{BDM}_k$) elements of order $k$ to approximate the tensor
%$\bsigma_h$, together with continuous Lagrange elements
%$\PP_k^n$ for the velocity $\uu_h$,
%or other computationally cheaper finite element subspaces of $\HH\times\PP_k^n$. 
%However, the convergence analysis requires the use of the enriched space $\QQ_{h,k}$, whose velocity component is based on $\PP_{k+1}^n$.
%\end{remark}
%\textcolor{red}{Llegue hasta ac\'a: Viernes 31 Julio 2026 (17:27 hrs)}

%
%
%
\section{Error analysis} \label{error}
We begin by establishing the consistency of the discrete formulation \eqref{NitscheScheme}.
\begin{lemma}
  \label{lemma-orto}
For $s>1/2$,  assume $(\bsigma,\uu)\in H^{s}(\OO)^{n \times n} \cap \HH  \times   H^{1+s}(\OO)^{n}\cap \QQ$
  solution of \eqref{eq:2.1}, and  $(\bsigma_h,\uu_h) \in \HH_{h,k} \times \QQ_{h,k}$ solution of  the discrete scheme \eqref{NitscheScheme}. Then 
  \[
    A_N((\bsigma-\bsigma_h,\uu-\uu_h),(\btau_h,\vv_h))=0\,,
  \]
  for all $(\btau_h,\vv_h)\in\HH_{h,k}\times\QQ_{h,k}$. 
  % \franz{We need some extra regularity to define the penalty term for the continuous weak solution (the same as for the a priori bound should be fine).}
\end{lemma}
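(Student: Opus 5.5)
The plan is to prove Galerkin orthogonality by showing that the exact solution satisfies the discrete equations. That is, we will show
\[
A_N((\bsigma,\uu),(\btau_h,\vv_h)) = F_N(\btau_h,\vv_h)\qquad \text{for all } (\btau_h,\vv_h)\in\HH_{h,k}\times\QQ_{h,k}.
\]
Subtracting the discrete problem \eqref{NitscheScheme} then gives the claim. First we check that $A_N((\bsigma,\uu),\cdot)$ is well defined. The assumption $\bsigma\in H^s(\OO)^{n\times n}$ with $s>1/2$ gives $\bsigma\nn\in L^2(\Gamma)^{n}$ by the trace theorem. The assumption $\uu\in H^{1+s}(\OO)^n$ gives $\uu|_{\Gamma}\in H^{1/2+s}(\Gamma)^n\subset L^2(\Gamma)^n$. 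Hence the boundary integrals over $\GN$ and the penalty terms on $E\in\EE_N$ make sense as $L^2$ integrals. Also $\btau_h\nn|_E$ is polynomial, and $\vv_h$ is continuous.

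Next we test the strong equations \eqref{eq:2.1} against $(\btau_h,\vv_h)$, without imposing $\btau_h\nn=\zero$ on $\GN$. We multiply the constitutive equation $\nu^{-1}\bsigma^d-\nabla\uu=\zero$ by $\btau_h$ and use $\bsigma^d:\btau_h=\bsigma^d:\btau_h^d$. Integrating by parts (Green's formula in $\HH\times H^1$, legitimate since the traces are $L^2$) gives
\[
\frac1\nu\int_\OO\bsigma^d:\btau_h^d+\int_\OO\uu\cdot(\nabla\cdot\btau_h)=\int_{\GD}\btau_h\nn\cdot\gD+\int_{\GN}\btau_h\nn\cdot\uu .
\]
Testing the momentum equation with $\vv_h$ gives $-\int_\OO\vv_h\cdot\nabla\cdot\bsigma+\theta\int_\OO\uu\cdot\vv_h+\int_\OO(\nabla\uu)\balpha\cdot\vv_h=\int_\OO\ff\cdot\vv_h$. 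The augmentation terms are treated directly. The $\kappa_1$ term vanishes because $\nabla\uu-\nu^{-1}\bsigma^d=\zero$. The $\kappa_2$ term equals $-\frac{\kappa_2}{2\theta}\int_\OO\ff\cdot(\nabla\cdot\btau_h+\theta\vv_h+(\nabla\vv_h)\balpha)$ by the momentum equation. The $\kappa_3$ term equals $\kappa_3\int_{\GD}\gD\cdot\vv_h$ since $\uu=\gD$ on $\GD$.

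Adding these identities gives $A((\bsigma,\uu),(\btau_h,\vv_h))=F(\btau_h,\vv_h)+\int_{\GN}\btau_h\nn\cdot\uu$. The extra consistency boundary term is exactly cancelled by the term $-\int_{\GN}\btau_h\nn\cdot\uu$ in \eqref{formAN}. For the remaining Nitsche terms we use $\bsigma\nn=-\gN$ on $\GN$. The skew-symmetric term becomes $\int_{\GN}\bsigma\nn\cdot\vv_h=-\int_{\GN}\gN\cdot\vv_h$. The penalty term becomes $\sum_{E\in\EE_N}\gamma_Nh_E\int_E\bsigma\nn\cdot\btau_h\nn=-\sum_{E\in\EE_N}\gamma_Nh_E\int_E\gN\cdot\btau_h\nn$. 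Both match the corresponding terms in $F_N$, which establishes the consistency identity.

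The main obstacle is making the integration by parts and the boundary terms rigorous. Since $\btau_h\nn$ does not vanish on $\GN$, the duality pairing $\langle\btau_h\nn,\uu\rangle_\Gamma$ must be split into its $\GD$ and $\GN$ parts. This split is justified because all traces lie in $L^2$ under the hypothesis $s>1/2$, which is exactly why that regularity is assumed. Likewise, the identity $\bsigma\nn=-\gN$ must be used edgewise in $L^2(E)$, which requires $\gN\in L^2(\GN)^n$, as assumed for the scheme.
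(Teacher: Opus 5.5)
Your proposal is correct and follows the same route as the paper, whose proof simply says the claim follows from \eqref{NitscheScheme}, integration by parts, and the regularity of $(\bsigma,\uu)$; you spell out that consistency argument, verifying $A_N((\bsigma,\uu),(\btau_h,\vv_h))=F_N(\btau_h,\vv_h)$ term by term and then subtracting the discrete problem. One small refinement: splitting $\langle\btau_h\nn,\uu\rangle_\Gamma$ into its $\GD$ and $\GN$ parts already works for $\uu\in\QQ$, because $\btau_h\nn$ is piecewise polynomial; the hypothesis $s>1/2$ is really needed to have $\bsigma\nn\in L^2(\GN)^n$ in the skew-symmetric and penalty terms.
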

%
% Proof
%
\begin{proof}
The claim follows directly from \eqref{NitscheScheme}, integration by parts,
and the extra regularity assumptions of $(\bsigma,\uu)\in H^{s}(\OO)^{n \times n} \cap \HH  \times   H^{1+s}(\OO)^{n}\cap \QQ$, which satisfies \eqref{eq:2.1}.
\end{proof}
%
%
%
%
%
% We define the space \[\ZZ \defi \left\{\btau \in L^2(\OO)^{n\times n}: \,  \right\}.\]

Now, we introduce some interpolation operators, together with their corresponding approximation properties, which will be instrumental in establishing the optimal convergence of the proposed method. For each $T\in \TT_h$, we define $\PTk:L^2(T)^n \longrightarrow \PP_k(T)^n$ as the $L^2$-orthogonal projector. Then, the following approximation estimates hold  (see \cite[Theorem~18.16]{EG21-FE1}): % (see \cite[Proposition~1.134]{EG04}):
  \begin{align}
    \label{proyector1}
    \|\vv - \PTk(\vv)\|_{0,T} \le&\, C\, h_T |\vv|_{1,T}\,,
    \\[4px]
    \label{proyector2}
    |\vv - \PTk(\vv)|_{1,T} \le&\, C\, |\vv|_{1,T}\,,
  \end{align}
  for all $\vv\in H^1(T)^n$.  To estimate the pseudostress error, we consider, for $s\in (1/2,k+1]$, the global and local Raviart--Thomas interpolation operators $\Pi_h^k: H^{s}(\OO)^{n \times n} \cap\HH \longrightarrow \HH_{h,k}$ and $\Pi_T^k(\btau) \defi \Pi_h^k(\btau)|_T$, respectively, characterized by the following identities (for details, see \cite{BookBrezziFortin2013,Gatica2014}):
  \begin{equation}
  \label{CHRT1}
\int_E \Pi_T^{k}(\btau)\nn \cdot \boldsymbol{q} = \int_E \btau\nn \cdot \boldsymbol{q}\qquad \forall E\in \EE(T), \, \forall T\in\TT_h, \forall \boldsymbol{q}\in \PP_k(E)^n,\, k\ge 0\,,
\end{equation}
  and
    \begin{equation}
  \label{CHRT2}
\int_T \Pi_h^{k}(\btau) : \bsigma = \int_T \btau  : \bsigma   \qquad \forall T\in\TT_h, \forall \bsigma\in \PP_{k-1}(T)^{n \times n},\, k\ge 1\,.
\end{equation}
%From the characterization \eqref{CHRT1} of the Raviart--Thomas interpolant, we have the follow conditions:
From \eqref{CHRT1}, it follows that
\begin{equation}
  \label{aux:prop2}
  \int_E (\btau - \Pi_T^k (\btau))\nn\cdot \PTk(\vv)=0
  \qquad
  \forall\btau\in H^{k+1}(T)^{n\times n}\,,
  \forall\vv\in L^2(E)^n\,,
\end{equation}
for all edge/face $E$ of the element $T\in\TT_h$. % (see \cite{BookBrezziFortin2013,Gatica2014}).
Moreover, for $s\in (0,k+1]$ the interpolation operator satisfies the following estimate (see \cite{GaticaGaticaMarquez2014} for details):
\begin{align}
  \label{RT1b}
  \| \btau - \Pi_h^k (\btau)  \|_{\ddiv,\OO} \le&\,  C\, h^{s}\,
  \Bigg\{ \| \btau\|_{s,\OO}+\| \nabla \cdot \btau\|_{s,\OO} \Bigg\}\,,
\end{align}
for all $\btau \in H^{s}(\OO)^{n \times n} \cap\HH $ with $\nabla \cdot \btau \in H^{s}(\OO)^{n}$.\\

The following result establishes a new interpolation error estimate for $\Pi_h^k$ on the boundary $\GN$, which will be used to control the Nitsche term in the convergence analysis of the scheme.
\begin{lemma}
 \label{lemmatecnico}
 Let $s\in(1/2,k+1]$ %$\medio <s \le k+1$
 and $\bsigma \in H^{s}(\OO)^{n \times n} \cap\HH $. Then, there is a positive constant $C$, such that
 \begin{align}
  \label{RT2b}
  \left\{\sum_{E\in \EE_N} h_E\, \left\| \left(\btau -\Pi_h^k (\btau)\right)\nn\right\|_{0,E}^2 \right\}^{1/2}\le C h^s\, \|\btau\|_{s,\OO}\,.
  \end{align}
\end{lemma}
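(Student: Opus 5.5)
The estimate is local, so I will prove it element by element and then sum. For each $E\in\EE_N$ let $T_E\in\TT_h$ be the (unique) element having $E$ as an edge/face. Since the family is shape regular, $h_E\simeq h_{T_E}$. The idea is to reduce the boundary quantity to $L^2$-projection errors on $E$ and then use a scaled trace inequality on $T_E$.

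\textbf{Step 1: reduce to a projection error on $E$.} The normal trace $\Pi_T^k(\btau)\nn|_E$ belongs to $\PP_k(E)^n$. By \eqref{CHRT1} it coincides with the $L^2(E)$-orthogonal projection $P_E^k(\btau\nn)$ onto $\PP_k(E)^n$. Here $\btau\nn|_E\in L^2(E)^n$ because $s>1/2$. Hence
\[
\|(\btau-\Pi_h^k(\btau))\nn\|_{0,E}=\|\btau\nn-P_E^k(\btau\nn)\|_{0,E}\le \|(\btau-\boldsymbol{q})\nn\|_{0,E}
\]
for every $\boldsymbol{q}\in\PP_k(T_E)^{n\times n}$, since $\boldsymbol{q}\nn|_E\in\PP_k(E)^n$. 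This reduction avoids having to control $\Pi_h^k$ itself on the boundary, and it is valid for all $s>1/2$.

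\textbf{Step 2: scaled trace inequality.} For $\boldsymbol{w}\in H^s(T)$ with $1/2<s\le 1$ I use the fractional scaled trace inequality
\[
h_E\|\boldsymbol{w}\|_{0,E}^2\le C\big(\|\boldsymbol{w}\|_{0,T}^2+h_T^{2s}|\boldsymbol{w}|_{s,T}^2\big).
\]
It follows by mapping to the reference element, applying the trace theorem $H^s(\hat T)\hookrightarrow L^2(\hat E)$, and using the scaling of the Sobolev--Slobodeckij seminorm. For $s>1$ I apply the ordinary $H^1$ version instead, $h_E\|\boldsymbol{w}\|_{0,E}^2\le C(\|\boldsymbol{w}\|_{0,T}^2+h_T^2|\boldsymbol{w}|_{1,T}^2)$.

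\textbf{Step 3: polynomial approximation.} Take $\boldsymbol{w}=\btau-\boldsymbol{q}$ with $\boldsymbol{q}=\mathcal{P}_{T_E}^k(\btau)$, the $L^2$-projection applied componentwise. By the fractional Bramble--Hilbert / Deny--Lions estimates (cf. \cite[Theorem~18.16]{EG21-FE1}, which generalize \eqref{proyector1}--\eqref{proyector2}), for $s\le k+1$:
\begin{itemize}
\item $\|\btau-\boldsymbol{q}\|_{0,T}\le Ch_T^s|\btau|_{s,T}$;
\item $h_T^{s}|\btau-\boldsymbol{q}|_{s,T}\le Ch_T^s|\btau|_{s,T}$ when $s\le1$;
\item $h_T|\btau-\boldsymbol{q}|_{1,T}\le Ch_T^s|\btau|_{s,T}$ when $s>1$.
\end{itemize}
Combining with Steps 1 and 2 gives
\[
h_E\|(\btau-\Pi_h^k\btau)\nn\|_{0,E}^2\le Ch_{T_E}^{2s}\|\btau\|_{s,T_E}^2 .
\]

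\textbf{Step 4: summation.} Each element contains at most $n+1$ faces, so summing over $E\in\EE_N$ gives
\[
\sum_{E\in\EE_N}h_E\|(\btau-\Pi_h^k\btau)\nn\|_{0,E}^2\le Ch^{2s}\sum_T\|\btau\|_{s,T}^2\le Ch^{2s}\|\btau\|_{s,\OO}^2 .
\]
Here I use that the broken fractional norm is bounded by the global one, since the Slobodeckij double integral over $\OO\times\OO$ dominates the sum of the integrals over $T\times T$. Taking square roots yields \eqref{RT2b}.

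\textbf{Main obstacle.} The delicate point is the fractional range $1/2<s<1$. It needs both the scaled trace inequality with the $h_T^{2s}$ weight and the fractional approximation estimate for the $L^2$-projection with the correct scaling. I would establish both by a reference-element argument, using that $\mathcal{P}^k$ commutes with affine pullback and reproduces constants. Notably, no divergence regularity is needed, thanks to the face-moment characterization in Step 1.
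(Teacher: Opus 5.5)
Your proof is correct. Its first step is the same as the paper's: \eqref{CHRT1} identifies $(\Pi_h^k\btau)\nn|_E$ with $\PEk(\btau\nn)$, so the boundary error is bounded by $\|(\btau-\boldsymbol q)\nn\|_{0,E}$ for any polynomial $\boldsymbol q$ of degree at most $k$ on $T_E$. After that the two routes differ.

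The paper inserts only the mean value $Q_0$ into this bound. Together with the fractional trace and Poincar\'e inequalities, this settles $s\in(1/2,1)$. For $s=1$ the paper quotes \cite[(25)]{GaticaGaticaMarquez2014}. For $s\in(1,k+1]$ it constructs $\overline{Q}_r\in\PP_r(T)^{n\times n}$ via \cite[Lemma~3.7]{HolstLicht2023}, uses the invariance $\Pi_h^k\overline{Q}_r=\overline{Q}_r$, and applies the $s=1$ estimate to $\btau-\overline{Q}_r$.

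You instead use the full strength of the best-approximation inequality by inserting $\boldsymbol q=\mathcal{P}^k_{T_E}\btau$. One scaled trace inequality plus standard projection estimates then covers the whole range $s\in(1/2,k+1]$. This removes the case split, the polynomial invariance of $\Pi_h^k$, and the two external estimates. You also make explicit the summation over $\EE_N$ and the comparison of broken and global Slobodeckij seminorms, which the paper leaves implicit.

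The cost of your choice appears only for $s<1$. There you need $|\btau-\mathcal{P}^k_{T}\btau|_{s,T}\le C|\btau|_{s,T}$. This fractional-order stability is not among the integer-order estimates \eqref{proyector1}--\eqref{proyector2} and requires the reference-element argument you sketch. The paper's constant $Q_0$ gives $|\btau-Q_0|_{s,T}=|\btau|_{s,T}$ for free. Taking $\boldsymbol q=Q_0$ for $s\le 1$, and $\boldsymbol q=\mathcal{P}^k_{T_E}\btau$ only for $s>1$, would remove your stated main obstacle altogether.
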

%
% Proof
%
\begin{proof}
  We begin with the case $s\in(1/2,k+1]$. %$\medio < s < 1$.
  For $E\in \EE_N$, let $\PEk:L^2(E)^n \longrightarrow \PP_k(E)^n$ be the $L^2$-orthogonal projector. Then, by the definition of $\PEk$ and the characterization \eqref{CHRT1} of $\Pi_h^k$ on each $E\in \EE_N$, we have
\begin{equation}
\label{A1}
( \Pi_h^k \btau)\nn = \PEk(\btau \nn) \qquad \forall \btau \in H^{s}(\OO)^{n \times n} \cap\HH.
\end{equation}
Now, let $T\in \T_h$ be the unique element such that $E\in \EE(T) \cap \EE_N$. We define $Q_0 \defi \dfrac{1}{|T|}\displaystyle \int_T \btau$. It is clear that $Q_0\nn \in \PP_0(E)^n \subseteq \PP_k(E)^n$. Hence, by the best-approximation property of the orthogonal projector $\PEk$ and \eqref{A1}, we have
\begin{equation}
 \| (\btau-\Pi_h^k \btau)\nn\|_{0,E} = \| (I-\PEk)( \btau\nn)\|_{0,E} 
=
  \inf_{\vv \in \PP_k(E)^n} \| \btau\nn-\vv\|_{0,E} 
  \le \| \btau\nn-Q_0\nn\|_{0,E} 
    \label{A2}
  \le \| \btau-Q_0\|_{0,E}.
\end{equation}
%
%
%
%\begin{align}
%\nonumber
% \| (\btau-\Pi_h^k \btau)\nn\|_{0,E} =&\, \| (I-\PEk)( \btau\nn)\|_{0,E} \\
% \nonumber=&\,
%  \inf_{\vv \in \PP_k(E)^n} \| \btau\nn-\vv\|_{0,E} \\
%   \nonumber
%  \le&\, \| \btau\nn-Q_0\nn\|_{0,E} \\
%    \label{A2}
%  \le&\, \| \btau-Q_0\|_{0,E}\,.
%\end{align}
%
%
%
To proceed with the proof, we recall the following fractional trace inequality \cite[Lemma 2.1]{AlvArmDur24}: For all $\vv\in  H^{s}(\OO)^n$,
%$s\in (1/2,1)$, 
we have
\[ \|\vv\|_{0,E} \le C\,\Bigg\{ 
h_T^{-1/2}\,\|\vv\|_{0,T}+h_T^{s-1/2}\,|\vv|_{s,T} 
\Bigg\}\,,\]
and the fractional Poincar\'e inequality (for details, see \cite[Lemma~3.26]{EG21-FE1} or \cite[Lemma~7.1]{EG2017}): For all $\btau \in  H^{s}(\OO)^{n\times n}$, we get
\[ \|\btau - Q_0\|_{0,T}\le C h_T^s\, |\btau|_{s,T}\,.\]
Combining these estimates with \eqref{A2}, we obtain
\begin{equation*}
\nonumber
 \| (\btau-\Pi_h^k \btau)\nn\|_{0,E}   \le\, C\, \Bigg\{ 
h_T^{-1/2}\, \| \btau-Q_0\|_{0,T} +h_T^{s-1/2}\, | \btau-Q_0|_{s,T}\Bigg\} 
  \le\,
C h_T^{s-1/2}\, | \btau|_{s,T}\,.
\end{equation*}
The case $s=1$ is a direct consequence of estimate \cite[(25)]{GaticaGaticaMarquez2014}. For the remaining case, when $s\in(1/2,k+1]$, let $r \in \NN$ be such that $1\le r\le k$ and $r<s\le r+1$. By \cite[Lemma~3.7]{HolstLicht2023}, there exists a positive constant $C$ and a matrix $Q_r \in \PP_r(T)^{n \times n}$, such that
\begin{equation}
\label{asterisco1}
|\btau-Q_r|_{1,T} \le C h_T^{s-1}|\btau|_{s,T} \qquad \forall \btau \in H^s(T)^{n\times n}\,.
\end{equation}
If  define $\overline{Q}_r \defi Q_r + \dfrac{1}{|T|} \displaystyle \int_T(\btau-Q_r)$, we have $\displaystyle  \int_T \overline{Q}_r =\int_T \btau$, and therefore $
\overline{Q}_r -\btau \in L_0^2(T)^{n\times n}$.
Next, as $h_T < C$, by Poincar\'e inequality on $L_0^2(T)^{n\times n}$  (cf. \cite[Lemma 3.27]{EG21-FE1}) and \eqref{asterisco1}, we have
\begin{equation}
 \|\btau - \overline{Q}_r\|_{0,T} \le  C\, h_T    |\btau-\overline{Q}_r|_{1,T} 
 %\nonumber
 = C\, h_T    |\btau-Q_r|_{1,T} 
 \le  C\, h_T^{s-1}|\btau-Q_r|_{s,T}\,.
 \label{algun47}
\end{equation}
In turn, since $r \le k$ and $\overline{Q}_r \in \PP_r(T)^{n\times n} \subset \mathbb{RT}_k(T)$, we have $\Pi_h^k \overline{Q}_r=\overline{Q}_r$.
Finally, applying the result for $s=1$ together with \eqref{asterisco1} and \eqref{algun47}, we obtain
%\begin{align}
%\nonumber
%  \| (\btau-\Pi_h^k \btau)\nn\|_{0,E} \le&\,
%  \nonumber
%   \| (\btau - \overline{Q}_r)\nn - \Pi_h^k(\btau-\overline{Q}_r)\nn\|_{0,E} \\
%   \nonumber
%   \le &\,C\, h_E^{1/2}    \| \btau - \overline{Q}_r\|_{1,T}\\
%    \le&\,  C\, h_T^{s-1/2}|\btau-Q_r|_{s,T}\,,
%\end{align}
\[  \| (\btau-\Pi_h^k \btau)\nn\|_{0,E} \le
   \| (\btau - \overline{Q}_r)\nn - \Pi_h^k(\btau-\overline{Q}_r)\nn\|_{0,E} 
   \le \,C\, h_E^{1/2}    \| \btau - \overline{Q}_r\|_{1,T}\\
    \le\,  C\, h_T^{s-1/2}|\btau-Q_r|_{s,T}\,, \]
which completes the proof.
\end{proof}

%  
%   To estimate the pseudostress error, we consider the global and local Raviart--Thomas interpolants 
%$\Pi_h^k: H^{l+1}(\OO)^{n \times n} \cap\HH \longrightarrow \HH_{h,k}$  and $\Pi_T^k (\btau) \defi \Pi_h^k (\btau)|_T$, respectively, which satisfy: For all $T\in \TT_h$
%\begin{align}
%  \label{RT1}
%  |\btau - \Pi_T^k (\btau)|_{m,T} \le&\,  C\, h_T^{l-m+1}\,|\btau|_{l+1,T}\,,
%  \\
%  \label{RT2}
%  \|\nabla \cdot \btau - \nabla \cdot \Pi_T^k (\btau)  \|_{0,T} \le&\,  C\, h_T^{l+1}\,|\nabla \cdot \btau|_{l+1,T}\,,
%\end{align}
%for all $\btau \in H^{l+1}(T)^{n \times n}$, with  $\nabla \cdot \btau \in H^{l+1}(T)^n$, $0\le m \le l+1$, $0\le l \le k$.
 Finally, for the velocity approximations, we consider the Lagrange interpolant $\II_h^k: H^{s}(\OO)^n \longrightarrow \QQ_{h,k}$, which satisfies: For all $T \in \TT_h$
\begin{equation}
  % |\etu|_{l,T} \defi
  |\vv-\II_h^k (\vv) |_{l,T} \le  C h_T^{s-l}\, |\vv|_{s,T}\,,
\label{LAGRANGE}
\end{equation}
for all $\vv \in H^{s}(T)^n$ with $l=0,1$ and  $ \dfrac{n}{2}< s\le k+2$  (for details, see 
\cite{BookBrezziFortin2013,BS08,EG21-FE1}).
%\cite{BookBrezziFortin2013,BS08,EG21-FE1,Gatica2014}).

Theorem~\ref{main} provides an optimal \emph{a priori} error estimate in the energy norm \eqref{norm}, with a constant $C$ that is independent of $h$.
With the aim of making the dependence on the physical parameters at least partially explicit, the following result is established under assumptions on the parameters of problem $(P)$ that are typically satisfied in practice, namely:
\begin{equation}
 \label{parameter-model-dependences}
 \nu \le 1 \quad \text{and} \quad 1 \le  \|\balpha\|_{\infty,\OO} \le \theta\,.
\end{equation}
\begin{theorem}
\label{main}
Consider assumptions (H1)-(H2), the parameter conditions
\eqref{condicionkappa} on $\kappa_1$, $\kappa_2$, and $\kappa_3$, with
$\gamma_N>0$ and $k\ge0$, together with the conditions
\eqref{parameter-model-dependences}. Moreover, assume $s\in(1/2,k+1]$ and that the solution
of \eqref{eq:2.1} satisfies  $(\bsigma,\uu)\in H^{s}(\OO)^{n \times n} \cap \HH  \times   H^{s+1}(\OO)^{n}\cap \QQ$
with $\nabla\cdot\bsigma\in H^{s}(\Omega)^n$, and let
$(\bsigma_h,\uu_h)\in\HH_{h,k}\times\QQ_{h,k}$ be the solution of
\eqref{NitscheScheme}. Then, there exists a positive constant $C$,
independent of $h$, but depending on $\theta$, $\kappa_1$, $\kappa_2$, and $\gamma_N$, such that
\begin{equation}
  \nonumber
  \nh{(\bsigma-\bsigma_h,\uu-\uu_h)} \le  
   C\,h^{s}\,      
           \dfrac{2\theta}{\kappa_2} \max\left\{
      \dfrac{1}{\nu^{1/2}},\theta\right\}\, 
              \Bigg\{ 
             \dfrac{1}{\nu}\,\left\| \bsigma\right\|_{s,\OO}
    +   \dfrac{1}{\nu}\,\| \nabla \cdot \bsigma\|_{s,\OO}+\theta\,\| \uu\|_{s+1,\OO}
          \Bigg\}\,.
\end{equation}

%  \begin{equation}
%  \label{estima_error}
%    \nh{(\bsigma-\bsigma_h,\uu-\uu_h)} \le  
%  C\, h^{s}
%  \Bigg[ \left( 1 + \dfrac{1}{\nu^{1/2}}+ \dfrac{1}{\nu}\right)\,\left\| \bsigma\right\|_{s,\OO}  +
% \| \nabla \cdot \bsigma\|_{s,\OO}+\| \uu\|_{s+1,\OO}  \Bigg].
% \end{equation}
%
%
%
%
\end{theorem}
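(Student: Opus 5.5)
The plan is the standard Strang/C\'ea-type argument built on coercivity and Galerkin orthogonality. I will split the error as
\[
\bsigma-\bsigma_h=(\bsigma-\Pi_h^k\bsigma)+(\Pi_h^k\bsigma-\bsigma_h)=:\etsgma+\xisigma,\qquad \uu-\uu_h=(\uu-\II_h^k\uu)+(\II_h^k\uu-\uu_h)=:\etu+\xiu.
\]
The pair $(\xisigma,\xiu)$ lies in $\HH_{h,k}\times\QQ_{h,k}$. By the triangle inequality it suffices to bound $\nh{(\etsgma,\etu)}$ and $\nh{(\xisigma,\xiu)}$. The interpolation part $\nh{(\etsgma,\etu)}$ is controlled directly:
\begin{itemize}
\item \eqref{RT1b} bounds $\nu^{-1/2}\|(\etsgma)^d\|_{0,\OO}$ and $\|\nabla\cdot\etsgma\|_{0,\OO}$;
\item Lemma~\ref{lemmatecnico} bounds the Neumann penalty part $\sum_{E\in\EE_N}\gamma_N h_E\|\etsgma\nn\|_{0,E}^2$;
\item \eqref{LAGRANGE} with exponent $s+1$ bounds $\|\etu\|_{1,\OO}$;
\item a standard trace inequality $\|\etu\|_{0,\GD}\le C\|\etu\|_{0,\OO}^{1/2}\|\etu\|_{1,\OO}^{1/2}$, or the elementwise trace inequality, bounds $\|\etu\|_{0,\GD}$, giving $h^{s+1/2}\le h^s$.
\end{itemize}

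For the discrete part I will use \eqref{constant-C} and then Lemma~\ref{lemma-orto}:
\[
C_A\nh{(\xisigma,\xiu)}^2\le A_N((\xisigma,\xiu),(\xisigma,\xiu))=-A_N((\etsgma,\etu),(\xisigma,\xiu)).
\]
The remaining task is a continuity-type bound $|A_N((\etsgma,\etu),(\btau_h,\vv_h))|\le M\,\Theta\,\nh{(\btau_h,\vv_h)}$, where $\Theta$ is the bracketed $h^s$ quantity in the statement. I will treat the right-hand side term by term.

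The volume terms of $A$ are handled by Cauchy--Schwarz together with \eqref{trilinearprop-desigualdad0} for the convective parts. The $\kappa_2/(2\theta)$ residual term produces factors $\theta$ and $\|\balpha\|_{\infty,\OO}\le\theta$ multiplying $\|\etu\|_{1,\OO}$. The $\kappa_1$ term produces factors $\nu^{-1}$ on $\etsgma$, and I will use $\nu\le1$ to absorb lower powers of $\nu^{-1}$. The $\kappa_3$ boundary term on $\GD$ is bounded by $\|\etu\|_{0,\GD}\|\xiu\|_{0,\GD}$. The Nitsche penalty term is handled by Cauchy--Schwarz with $h_E$ weights and Lemma~\ref{lemmatecnico}.

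The delicate terms are the two boundary integrals on $\GN$. The first is $\int_{\GN}\etsgma\nn\cdot\xiu$. I will write $\xiu=\PTk(\xiu)+(\xiu-\PTk\xiu)$ on each boundary element. The first part vanishes by \eqref{aux:prop2}. For the second part I will use Cauchy--Schwarz with weights $h_E^{1/2}$ and $h_E^{-1/2}$, then a discrete trace inequality with \eqref{proyector1}--\eqref{proyector2}:
\[
h_E^{-1/2}\|\xiu-\PTk\xiu\|_{0,E}\le C(h_T^{-1}\|\xiu-\PTk\xiu\|_{0,T}+|\xiu-\PTk\xiu|_{1,T})\le C|\xiu|_{1,T}.
\]
This reduces the term to Lemma~\ref{lemmatecnico} times $\|\xiu\|_{1,\OO}$. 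The second is the consistency term $\int_{\GN}\xisigma\nn\cdot\etu$. Here I will pair $h_E^{1/2}\|\xisigma\nn\|_{0,E}$, which is part of the $\nh{\cdot}$ norm through the penalty and costs a factor $\gamma_N^{-1/2}$, with $h_E^{-1/2}\|\etu\|_{0,E}$. The latter is bounded by the elementwise trace inequality and \eqref{LAGRANGE}: $h_E^{-1/2}\|\etu\|_{0,E}\le C(h_T^{-1}\|\etu\|_{0,T}+|\etu|_{1,T})\le Ch_T^{s}|\uu|_{s+1,T}$.

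I expect the main obstacle to be this $\GN$ bookkeeping, because the continuity of $A_N$ fails in the natural norm without the penalty weight and the \eqref{aux:prop2} cancellation. A second difficulty is tracking the parameter dependence: collecting constants should yield the factor $C_A^{-1}\sim 2\theta/\kappa_2$ times $\max\{\nu^{-1/2},\theta\}$, with $\theta$ multiplying the velocity norms. Adding the interpolation bound then gives the stated estimate.
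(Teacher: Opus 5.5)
Your proposal is correct and follows essentially the same route as the paper. The shared steps are:
\begin{itemize}
\item the split into interpolation errors $(\etsgma,\etu)$ and discrete errors $(\xisigma,\xiu)$;
\item coercivity \eqref{constant-C} combined with Lemma~\ref{lemma-orto};
\item the insertion of $\PTk(\xiu)$ via \eqref{aux:prop2} in the skew-symmetric term;
\item the $h_E^{\pm 1/2}$-weighted pairing of the consistency term against the penalty part of $\nh{\cdot}$;
\item Lemma~\ref{lemmatecnico} for the Neumann traces.
\end{itemize}
The paper packages your term-by-term estimates into the single continuity bound \eqref{main:eq4}, which carries the factor $\max\{\nu^{-1/2},\theta\}$, and multiplies it by $C_A^{-1}=2\theta/\kappa_2$, exactly as you anticipate.
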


%\franz{(Maybe a comment about the dependency in $\nu$, particularly for $\nu$ small ?}
%\textcolor{red}{Llegu\'e hasta ac\'a (Lunes 12 de enero 18:57 hrs)}\\
%
%
%
\begin{proof} We proceed as in the proof of \cite[Theorem~5]{CCHP26}. To this end, we define  $\etsgma \defi \bsigma - \Pi_h^k (\bsigma)$ and 
$\etu    \defi \uu-\II_h^k (\uu).$ 
Recalling the definition of the bilinear form $A_N(\cdot,\cdot)$ (cf.~\eqref{formAN}) and applying~\eqref{aux:prop2} together with the Cauchy--Schwarz inequality, we obtain
  \begin{align}
    \nonumber
    &
    A_N((\etsgma,\etu),(\btau_h,\vv_h))
    \\ \nonumber
    % 
    % 1
    % 
    =&
    \dfrac{1}{\nu}\,\displaystyle \int_{\OO} (\etsgma)^d:\btau_h^d
    + \int_{\OO} \etu \cdot (\nabla \cdot \btau_h)
    - \int_{\OO} \vv_h \cdot (\nabla \cdot \etsgma)
    + \theta\, \int_{\OO} \etu\cdot \vv_h
    +\int_{\OO}(\nabla\etu)\balpha\cdot\vv_h
    \\ \nonumber
    &\,
    + \kappa_1 \,  \displaystyle\int_{\OO} \left(\nabla \etu- \dfrac{(\etsgma)^d}{\nu}\right):\left(\nabla\vv_h +\dfrac{\btau_h^d}{\nu}\right)
    + \dfrac{\kappa_2}{2\theta}\,\int_{\OO} (\nabla \cdot \etsgma-\theta\,\etu-(\nabla\etu)\balpha) \cdot (\nabla\cdot\btau_h+\theta\,\vv_h+(\nabla\vv_h)\balpha)
    \\ \nonumber
    &\,
    + \kappa_3 \, \int_{\GD} \etu \cdot \vv_h
    - \int_{\GN} (\btau_h \nn)\cdot \etu
    + \sum_{E\in \EE_N}  \int_{E} (\etsgma \nn)\cdot \left(\vv_h - \PTk(\vv_h)\right)
    + \sum_{E\in \EE_N} \gamma_N h_E \int_{E} ( \etsgma \nn)\cdot  (\btau_h \nn)\\ \nonumber
        % 
    % 2
    % 
    \le&
    \, C\,
    \Bigg\{
    \dfrac{1}{\nu}\,\| (\etsgma)^d\|_{0,\OO}^2
    + \|\etu\|^2_{0,\OO}
    + \|\nabla \cdot \etsgma\|_{0,\OO}^2
    + \theta^2\,\|\etu\|^2_{0,\OO}
    + \|(\nabla\etu)\balpha\|_{0,\OO}^2
    + \left\|\nabla \etu- \dfrac{(\etsgma)^d}{\nu} \right\|_{0,\OO}^2
    \\ \nonumber
    &
    \quad
    + \dfrac{1}{\theta^2}\left\|\nabla \cdot \etsgma-\theta\,\etu-(\nabla\etu)\balpha\right\|_{0,\OO}^2
    + \| \etu\|_{0,\GD}^2
    +\,\sum_{E\in \EE_N} h_E^{-1}  \|\etu \|_{0,E}^2
    + \sum_{E\in \EE_N} h_E   \|\etsgma \nn\|_{0,E}^2 
    \Bigg\}^{1/2}
    \\ \nonumber
    &
    \Bigg\{
    \dfrac{1}{\nu}\,\| \btau_h^d\|_{0,\OO}^2
    + \| \nabla \cdot \btau_h \|^2_{0,\OO}
    + \| \vv_h \|_{0,\OO}^2
    +\, \left\| \nabla\vv_h +\dfrac{\btau_h^d}{\nu} \right\|_{0,\OO}^2
    + \|\nabla\cdot\btau_h+\theta\,\vv_h+(\nabla\vv_h)\balpha\|_{0,\OO}^2
    \\ \nonumber
    &
    \quad
    + \| \vv_h\|_{0,\GD}^2
    + \sum_{E\in \EE_N} \gamma_N h_E \|\btau_h \nn\|_{0,E}^2
    + \sum_{E\in \EE_N} h_E^{-1}\left\|\vv_h - \PTk(\vv_h) \right\|_{0,E}^2 
    \Bigg\}^{1/2}.
    \end{align}
Now, using~\eqref{trilinearprop-desigualdad0}, Young's inequality, and a trace inequality, we obtain
  \begin{align}
    \nonumber
    &
    A_N((\etsgma,\etu),(\btau_h,\vv_h))
    \\ \nonumber
    \le&
    \, C\, \Bigg\{
    \dfrac{1}{\nu}\,\left( 1 + \dfrac{1}{\nu}\right)\,\| (\etsgma)^d\|_{0,\OO}^2
    + \left(1+\dfrac{1}{\theta^2}\right)\|\nabla\cdot\etsgma\|_{0,\OO}^2
    + \left(1+\theta^2+\|\balpha\|_{\infty,\Omega}^2+\dfrac{\|\balpha\|_{\infty,\Omega}^2}{\theta^2}\right)\|\etu\|^2_{1,\OO}
    \\ \nonumber
    &\,
    + \sum_{E\in \EE_N} h_E^{-1} \|\etu \|_{0,E}^2
    + \sum_{E\in \EE_N} h_E \|\etsgma \nn\|_{0,E}^2
    \Bigg\}^{1/2}
    \\ \nonumber
    &
    \,\Bigg\{
      \dfrac{1}{\nu}\left(1+\dfrac{1}{\nu}\right)\,\|\btau_h^d\|_{0,\OO}^2
    + \| \nabla \cdot \btau_h \|^2_{0,\OO}
    + (1+\theta^2+\|\balpha\|_{\infty,\OO}^2)\| \vv_h \|_{1,\OO}^2
    + \,\| \vv_h\|_{0,\GD}^2
    \\ \nonumber
    &
    +\,\sum_{E\in \EE_N} \gamma_N h_E \|\btau_h \nn\|_{0,E}^2
    +\, \sum_{E\in \EE_N} h_E^{-1} \left\|\vv_h - \PTk(\vv_h) \right\|_{0,E}^2
    \Bigg\}^{1/2}\\ \nonumber
     % 
    % 4
    % 
    \le&
    \, C\, \Bigg\{
    \dfrac{1}{\nu^2}\, \| (\etsgma)^d\|_{0,\OO}^2
    + \|\nabla\cdot\etsgma\|_{0,\OO}^2
    + \theta^2\, \|\etu\|^2_{1,\OO}
   % \\ \nonumber
   % &\,
    + \sum_{E\in \EE_N} h_E \|\etsgma \nn\|_{0,E}^2
    + \sum_{E\in \EE_N} h_E^{-1} \|\etu \|_{0,E}^2
    \Bigg\}^{1/2}
    \\ \nonumber
    &
    \,\Bigg\{
      \dfrac{1}{\nu^2}\,\|\btau_h^d\|_{0,\OO}^2
    + \| \nabla \cdot \btau_h \|^2_{0,\OO}
    + \theta^2\, \| \vv_h \|_{1,\OO}^2
    + \,\| \vv_h\|_{0,\GD}^2
    \\
    &
    +\, \sum_{E\in \EE_N} h_E^{-1} \left\|\vv_h - \PTk(\vv_h) \right\|_{0,E}^2
        +\,\sum_{E\in \EE_N} \gamma_N h_E \|\btau_h \nn\|_{0,E}^2
    \Bigg\}^{1/2}\,
    ,
    \label{main:eq1}
  \end{align}
  for all $(\btau_h,\vv_h) \in \HH_{h,k}\times \QQ_{h,k}$.

  In turn, combining the local trace inequality (cf. \cite[Lemma 12.15]{EG21-FE1}) 
  \begin{equation}
    \label{local-trace}
    \| \vv \|_{0,E}^2 \le C\, \left( h_T^{-1} \,\|\vv \|_{0,T}^2  +h_T\,|\vv |_{1,T}^2  \right)\,
    \quad\forall \vv \in H^1(T)^d\,,
  \end{equation}
  and Young's inequality with the mesh regularity and \eqref{proyector1}-\eqref{proyector2}, we deduce
  \begin{align}
    %
   % \nonumber
    \sum_{E\in \EE_N} h_E^{-1}   \left\|\vv_h - \PTk(\vv_h) \right\|_{0,E}^2 
    %
    %
    %
  %  &\le\,C\, \sum_{T\in \TT_h} \left[ h_T^{-2} \, \left\|\vv_h - \PTk(\vv_h) \right\|_{0,T}^2 +\left|\vv_h - \PTk(\vv_h) \right|_{1,T}^2 \right]
   % \\
    \label{main:eq2}
    &\le \, C\, | \vv_h|_{1,\OO}^2\,.
  \end{align}
  Thus,  using these last estimates,  we obtain
  \begin{align}
    \nonumber
    &\dfrac{1}{\nu^2}\,\|\btau_h^d\|_{0,\OO}^2
    + \| \nabla \cdot \btau_h \|^2_{0,\OO}
    + \theta^2\,\| \vv_h \|_{1,\OO}^2
    + \,\| \vv_h\|_{0,\GD}^2
    \\ \nonumber
    &
    +\,\sum_{E\in \EE_N} \gamma_N h_E \|\btau_h \nn\|_{0,E}^2
      +\, \sum_{E\in \EE_N} h_E^{-1} \left\|\vv_h - \PTk(\vv_h) \right\|_{0,E}^2
    \\
    \label{aux:nh}
    &\le\;\max\left\{
     \dfrac{1}{\nu},
      \theta^2\right\}\, 
      \nh{(\btau_h,\vv_h)}^2\,.
  \end{align}
  On the other hand, from the definition of the deviator  $(\etsgma)^d$, we get %of a tensor %and \eqref{RT1}, it follows that
  \begin{equation}
    \label{RT3}
    \| (\etsgma)^d\|^2_{0,T}
    =  \left\| \etsgma - \dfrac{\tr{(\etsgma)}}{n} \II\right\|^2_{0,T}
    \leq 2\,\left\| \etsgma\right\|^2_{0,T} + \dfrac{2}{n}\,\left\| \tr{(\etsgma)}\right\|^2_{0,T}
    \leq C\,\,\|\etsgma\|_{0,T}^2\,.
    %\le C\, h_T^{s}\,|\bsigma|_{s,T}\,.
  \end{equation}
  %and from this and \eqref{RT1}, we get 
  %\begin{align}
  %  \label{RT3}
  %  \| (\etsgma)^d\|_{0,T} \le&\,  C\, h_T^{l+1}\,|\bsigma|_{l+1,T}\,.
  %\end{align}
  %
  %
  %
  Then, applying Lemma \ref{lemmatecnico}, \eqref{RT1b},  \eqref{LAGRANGE}, 
\eqref{local-trace}, \eqref{RT3}, and employing again the mesh regularity, we deduce that
  \begin{align}
    \nonumber
    &
      \, \dfrac{1}{\nu^2}\,\| (\etsgma)^d\|_{0,\OO}^2
      + \|\nabla\cdot\etsgma\|_{0,\OO}^2
      + \theta^2\, \|\etu\|_{1,\OO}^2+ \sum_{E\in \EE_N} h_E   \|\etsgma \nn\|_{0,E}^2
      + \sum_{E\in \EE_N} h_E^{-1}  \|\etu \|_{0,E}^2
    \\ %\nonumber
    % 
    % 1
    % 
%    \le
%    &
%      \, C\, \Bigg\{
%      \dfrac{1}{\nu^2}\,\left\| \etsgma \right\|_{0,\OO}^2
%      + \| \nabla \cdot \etsgma\|_{0,\OO}^2
%      + \theta^2\, \|\etu\|_{1,\OO}^2
%    \\ \nonumber
%      &\,
%      + \sum_{T\in \TT_h} \left[   \|\etsgma \|_{0,T}^2 +h_T^2\, |\etsgma |_{1,T}^2 \right]
%      %+ \| \etu\|^2_{1,\OO} 
%      +\sum_{T\in \TT_h} \left[ h_T^{-2} \|\etu \|_{0,T}^2  +|\etu |_{1,T}^2  \right]
%      \Bigg\}
%    \\
    % 
    \le &
          \, C\,h^{2s}\, 
          \Bigg\{ 
             \dfrac{1}{\nu^2}\,\left\| \bsigma\right\|_{s,\OO}^2
    +\dfrac{1}{\nu^2}\,\| \nabla \cdot \bsigma\|^2_{s,\OO}+\theta^2\,\| \uu\|^2_{s+1,\OO}
          \Bigg\}\,.
    \label{main:eq3}
  \end{align}
  Connecting \eqref{main:eq1}, \eqref{aux:nh} and \eqref{main:eq3}, we get %can deduce that 
  \begin{equation}
    \label{main:eq4}
    A_N((\etsgma,\etu),(\btau_h,\vv_h))
    \le
    C\,h^{s}\,\max\left\{
     \dfrac{1}{\nu^{1/2}},
      \theta\right\}\, 
         \Bigg\{ 
             \dfrac{1}{\nu}\,\left\| \bsigma\right\|_{s,\OO}
    +\dfrac{1}{\nu}\, \| \nabla \cdot \bsigma\|_{s,\OO}+\theta\,\| \uu\|_{s+1,\OO}
          \Bigg\}
    \,\nh{(\btau_h,\vv_h)}\,,
  \end{equation}
  for all $(\btau_h,\vv_h) \in \HH_{h,k}\times \QQ_{h,k}$.
  %
  %
  %
%  \\
%%
%%
%%
%  Next, we define
%  \begin{equation*}
%    \label{def:xi}
%    \xisigma\defi \Pi_h^k (\bsigma)-\bsigma_h
%    \quad\text{ and }\quad
%    \xiu    \defi \II_h^k (\uu)-\uu_h\,,
%  \end{equation*}
%  % 
%  and note that
%  \begin{equation}
%    \label{aux:trig}
%    \nh{(\bsigma-\bsigma_h,\uu-\uu_h)}
%    \le
%    \nh{(\etsgma,\etu)}
%    +
%    \nh{(\xisigma,\xiu)}\,.
%  \end{equation}

 On the other hand,  from \eqref{RT1b}, \eqref{LAGRANGE}, the mesh regularity, and the local and global trace inequalities, we have
  \begin{align}
   % \nonumber
    \nh{(\etsgma,\etu)}^2
    % 
    % 1
    % 
%    =&\, 
%         \dfrac{1}{\nu}\,
%         \|\left(\etsgma\right)^d\|_{0,\OO}^2
%         + \left\| \nabla \cdot \etsgma\right\|^2_{0,\OO}
%         + \sum_{E\in \EE_N} \gamma_N h_E \| \etsgma \nn\|^2_{0,E}
%         + \|\etu\|_{1,\OO}^2
%         + \|\etu\|_{0,\GD}^2
%    \\
    \label{main:eq5}
    % 
    % 2
    % 
    \le &\,C\,h^{2s}\,
          \left[
          %\left(1+
          \dfrac{1}{\nu}
          %\right)
          \,
           \|\bsigma\|^2_{s,\OO}
          +\|\nabla\cdot\bsigma\|^2_{s,\OO}
          +\|\uu\|_{s+1,\OO}^2
          \right]\,.
  \end{align}

%   \begin{align}
%     \nonumber
%     \nh{(\xisigma_h,\xiu_h)}^2
%     &\le C_A^{-1}\,A_N((\xisigma_h,\xiu_h),(\xisigma_h,\xiu_h))
%     \\
%     \nonumber
%     &= -C_A^{-1}\,A_N((\etsgma,\etu),(\xisigma_h,\xiu_h))
%     \\
%     \nonumber
%     &\le C\,h^{s}\,\calM^{1/2}\nh{(\xisigma_h,\xiu_h)}\,,
%   \end{align}
Now, defining
$\left( \xisigma_h,\xiu_h \right)\defi$
$\left(\Pi_h^k(\bsigma)-\bsigma_h,\,
\II_h^k(\uu)-\uu_h\right),$ and using the coercivity of $A_N(\cdot,\cdot)$ (cf.~\eqref{constant-C}) 
with $C_A^{-1}= \dfrac{2\theta}{\kappa_2}$, 
and
Lemma~\ref{lemma-orto}, 
we obtain
      \begin{equation}
     \nh{(\xisigma_h,\xiu_h)}^2
\le \dfrac{2\theta}{\kappa_2} \,A_N((\xisigma_h,\xiu_h),(\xisigma_h,\xiu_h))
     \\
     \nonumber
= -\dfrac{2\theta}{\kappa_2}\,A_N((\etsgma,\etu),(\xisigma_h,\xiu_h))\,,
   \end{equation}
   and applying \eqref{main:eq4}, we conclude that
   \begin{equation}
     \label{nhxi}
     \nh{(\xisigma,\xiu)}
     \le   C\,h^{s}\,
     \dfrac{2\theta}{\kappa_2} \max\left\{
      \dfrac{1}{\nu^{1/2}},\theta\right\}\, 
              \Bigg\{ 
             \dfrac{1}{\nu}\,\left\| \bsigma\right\|_{s,\OO}
    +\dfrac{1}{\nu}\,\| \nabla \cdot \bsigma\|_{s,\OO}+\theta\,\| \uu\|_{s+1,\OO}
          \Bigg\}\,.
   \end{equation}

   The proof is completed noting that 
  \begin{equation*}
%    \label{aux:trig}
    \nh{(\bsigma-\bsigma_h,\uu-\uu_h)}
    \le
    \nh{(\etsgma,\etu)}
    +
    \nh{(\xisigma,\xiu)}\,,
  \end{equation*}
  and then combining the above estimate with
 \eqref{main:eq5} and \eqref{nhxi}.
% \\
%\textcolor{red}{Llegue hasta ac\'a: Martes 4 de agosto 2026 (17:44 hrs)}\\
%\textcolor{red}{Para la pr\'oxima revisar las cuentas anteriores!!}\\
\end{proof}

\begin{remark}
An important difference between Theorem~\ref{main} and the corresponding result in~\cite{CCHP26} lies in the regularity assumptions imposed on the exact solution $(\bsigma,\uu)$. More precisely, thanks to Lemma~\ref{lemmatecnico}, the stronger assumption
$   (\bsigma,\uu)\in H^{k+1}(\OO)^{n \times n} \cap \HH \times H^{k+2}(\OO)^{n}\cap \QQ $
required in~\cite{CCHP26}, can be substantially relaxed, leading to weaker regularity requirements that are essentially minimal for the present analysis.
\end{remark}

%\input{numeric3.tex}
%\textcolor{red}{Llegue hasta ac\'a: Marte 13 Octubre 2026 (21:16 hrs)}

%%%%%%%%%%%%%%%%%%%% 
\section{Numerical experiments}\label{num}
%%%%%%%%%%%%%%%%%%%% 
In this section, we present a series of numerical experiments in two and three dimensions to assess the performance of the proposed method and confirm the convergence rates established in Theorem~\ref{main}. The numerical scheme was implemented using the open-source finite element library \texttt{FEniCS}~\cite{fenics15}, and the resulting linear systems were solved using the direct solver \texttt{MUMPS}~\cite{MUMPS01}. 
The numerical solutions in two dimensions were visualized using \texttt{ParaView}~\cite{Paraview}, whereas the convergence curves were generated using \texttt{Gnuplot}~\cite{gnuplot}. Moreover, in accordance with Lemma \ref{pseudocoercitivity}, we use the following parameters in all numerical examples:
\[
  \kappa_1=0.9\,\nu\,,
  \qquad
  \kappa_2=\frac12\min\left\{ 1 - \dfrac{\eps}{2\theta}, \dfrac{\theta \kappa_1}{n\|\balpha\|^2_{\infty,\OO}}\right\}\,,
  \qquad
  \kappa_3=\|\balpha\cdot\nn\|_{\infty,\GD}\,,
    \qquad\text{ and }\qquad
    \gamma_N=100\,\nu^{-1}.
\]

% To illustrate the numerical convergence of the proposed method, we compute the errors
% \[
%   e_{p}:=\|p-p_h\|_{0,\Omega},\quad
%   e_{\bsigma}:=\|\bsigma-\bsigma_h\|_{\text{div},\Omega},\quad
%   e_{\|\uu\|}:=|\uu-\uu_h|_{0,\Omega},\quad
%   e_{|\uu|}:=|\uu-\uu_h|_{1,\Omega},
% \]
% together with experimental rates of convergence
% \[
%   \text{rate}:=\dfrac{\log(e_{(\cdot)}/e'_{(\cdot)})}{\log(h/h')}
% \]
% where $h$ and $h'$ denote two consecutive mesh sizes with errors $e_{(\cdot)}$ and $e'_{(\cdot)}$, respectively.

%%%%%%%%%%%%%%%%%%%% 
\subsection{Robustness to non-solenoidal convection}
%%%%%%%%%%%%%%%%%%%% 
% This example corresponds to the code oseen_ex8.py
The aim of this example is to assess the performance of the proposed numerical method when the convective field is not solenoidal. To this end, we consider the following manufactured solution of problem (P): %\ref{Porig}:
\begin{equation}
\label{sol_example_1}
  \uu(x,y) \defi
  \begin{pmatrix}
    x(1-x)(1-2y)
    \\
    -(1-2x)y(1-y)
  \end{pmatrix}
  \qquad\text{ and }\qquad
  p(x,y) \defi (x-0.5)(y-0.5),
\end{equation}
on the domain $\OO\defi (0,1)\times(0,1)$.
Dirichlet and Neumann boundary conditions are prescribed on $\GD\defi\{(x,y)\in\partial\OO:x=0\vee y=0\}$ and $\GN\defi\partial\OO\setminus\GD$, respectively. The parameters of the generalized Oseen equations are set to $\theta=1$, $\nu=10^{-1}$, and the convective field is defined by
%\begin{equation*}
%  \balpha(x,y)\defi
%  \uu(x,y)
%  +
%  \dfrac{\varepsilon}{4}
%  \begin{pmatrix}
%    x\\
%    y
%  \end{pmatrix},
%\end{equation*}
$  \balpha(x,y) \defi   \uu(x,y) + \varepsilon(x/4,y/4)$, 
where $\varepsilon>0$ is the parameter appearing in assumption (H1). Since $\nabla\cdot\uu=0$, we have $\nabla\cdot\balpha=\varepsilon/2<\varepsilon$, and hence (H1) is satisfied. Finally, the boundary data $\gD$ and $\gN$, together with the source term $\ff$, are determined from the exact solution given in~\eqref{sol_example_1}.

Figure~\ref{fig:ex1_energy_norm} illustrates the convergence behavior of the proposed method on a sequence of uniformly refined meshes for $\varepsilon\in\{1,10^{-3},10^{-6},0\}$  with  $k=0$ and $k=1$. We observe that the numerical performance remains essentially unaffected as $\varepsilon$ decreases towards zero, corresponding to the transition from non-solenoidal to solenoidal convective fields. Moreover, the observed convergence rates are in agreement with the theoretical predictions of Theorem~\ref{main}. Similarly, Figure~\ref{fig:ex1} shows the convergence behavior of the individual errors in the pseudostress, velocity, and pressure, for which the expected convergence rates are observed across all values of $\varepsilon$, providing further numerical evidence of the robustness of the proposed method with respect to non-solenoidal convection.

\begin{figure}[htbp]
  \centering
  \includegraphics[width=0.45\textwidth]{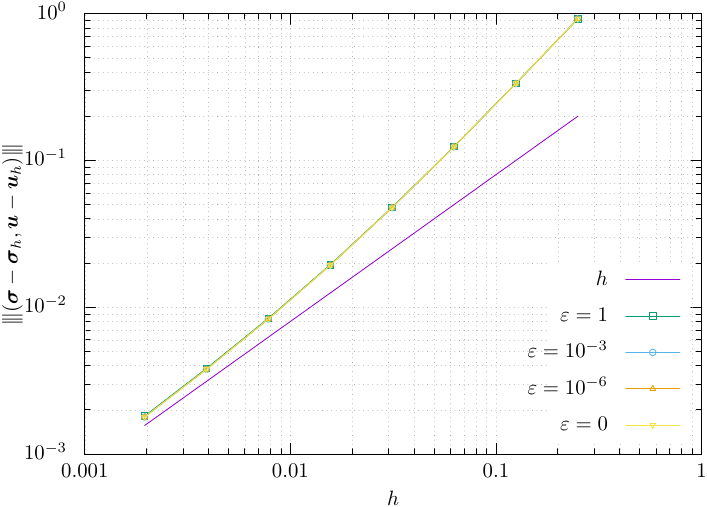}
  \hspace{10px}
  \includegraphics[width=0.45\textwidth]{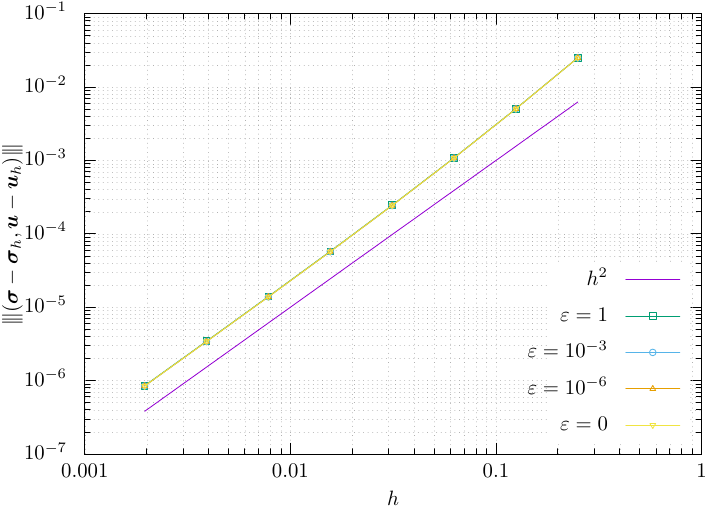}
  \caption{Robustness to non-solenoidal convection. Convergence history of the error energy norm $\nh{(\bsigma-\bsigma_h,\uu-\uu_h)}$,
  for $\varepsilon\in\{1,10^{-3},10^{-6},0\}$, with $k=0$ (left) and $k=1$ (right).}
  \label{fig:ex1_energy_norm}
\end{figure}

\begin{figure}[htbp]
  \centering
  \includegraphics[width=0.45\textwidth]{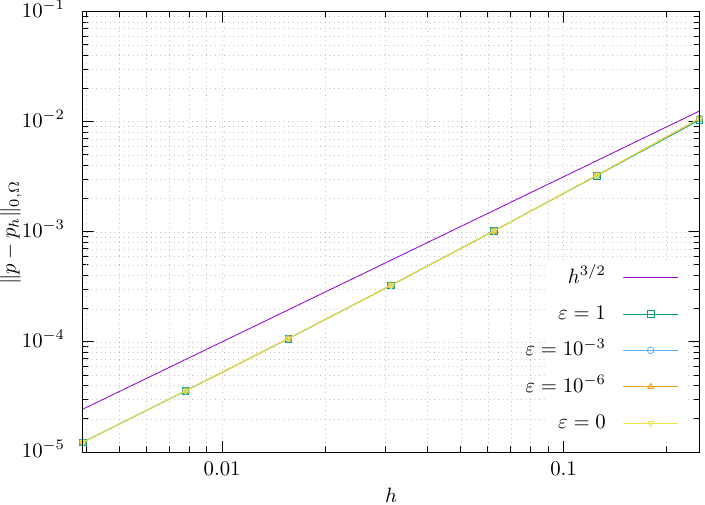}
  \hspace{8px}
  \includegraphics[width=0.45\textwidth]{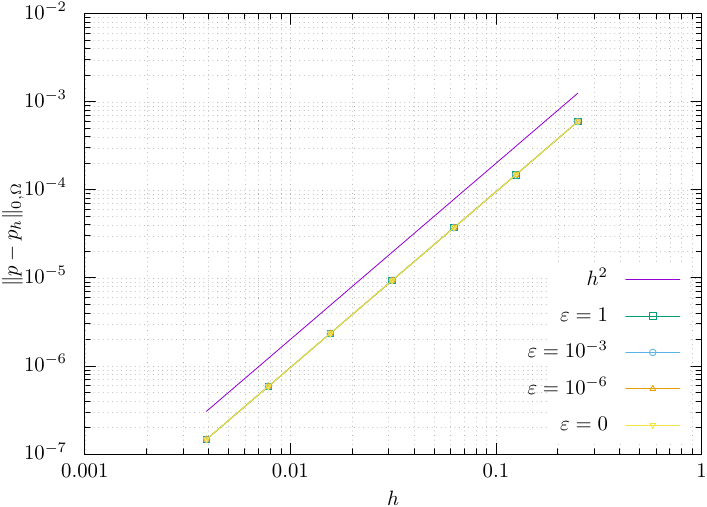}
  \\[8px]
  \includegraphics[width=0.45\textwidth]{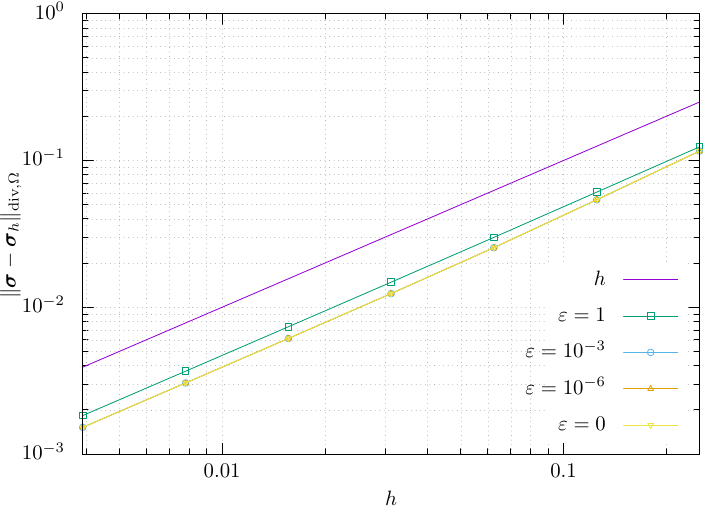}
  \hspace{8px}
  \includegraphics[width=0.45\textwidth]{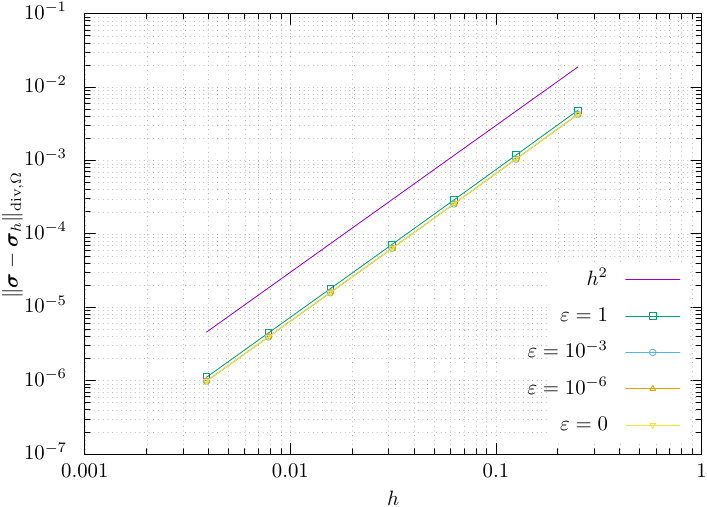}
  %\\[8px]
  %\includegraphics[width=0.45\textwidth]{ex1P1u.pdf}
  %\hspace{8px}
  %\includegraphics[width=0.45\textwidth]{ex1P2u.pdf}
  \\[8px]
  \includegraphics[width=0.45\textwidth]{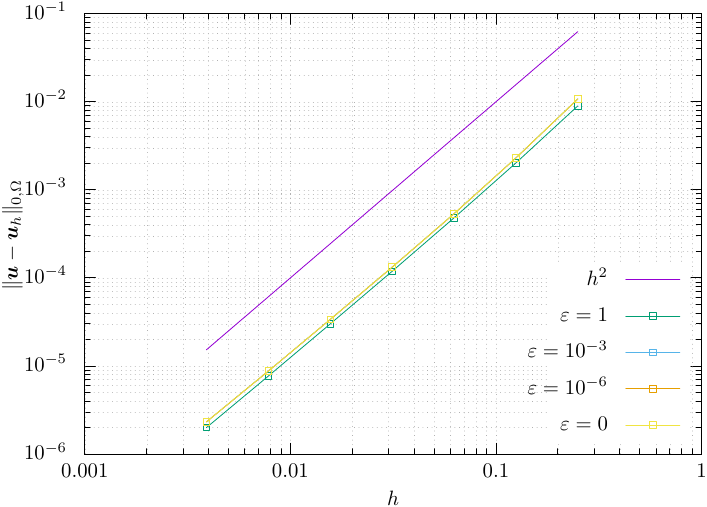}
  \hspace{8px}
  \includegraphics[width=0.45\textwidth]{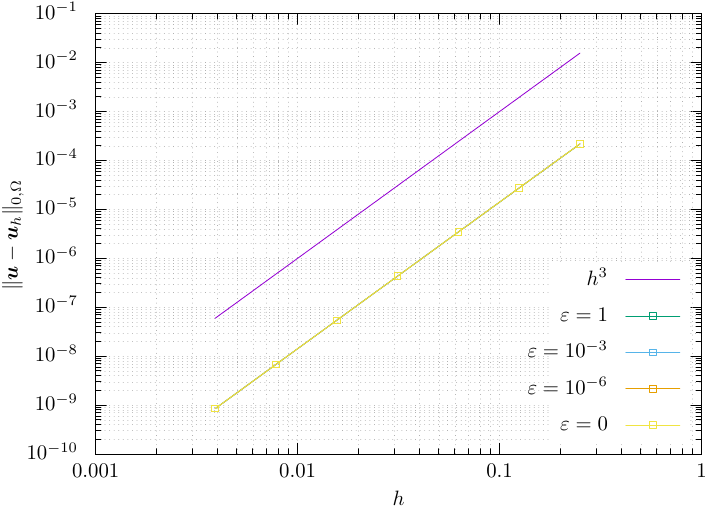}
  \\[8px]
  \includegraphics[width=0.45\textwidth]{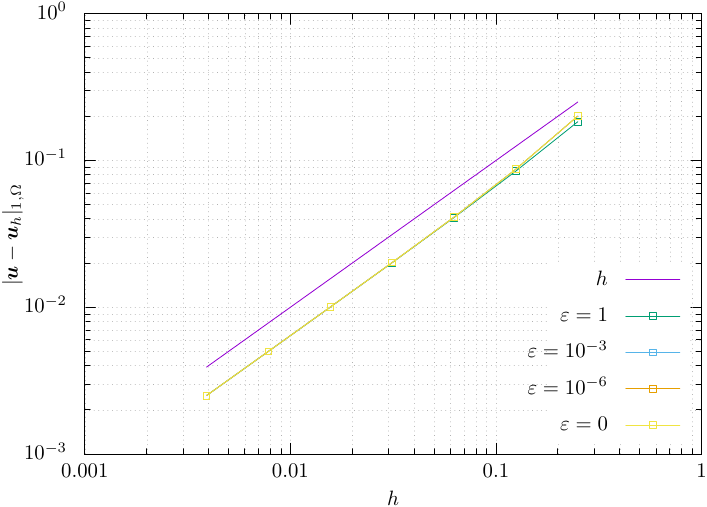}
  \hspace{8px}
  \includegraphics[width=0.45\textwidth]{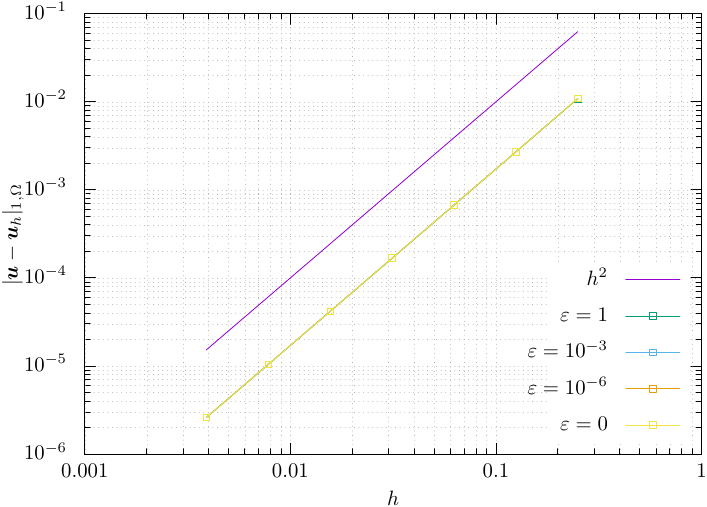}
  \caption{Robustness to non-solenoidal convection. Convergence histories of the errors, displayed from top to bottom, in the $L^2$-norm of the pressure, the $\mathbf{H}(\mathrm{div})$-norm of the pseudostress tensor, and the $L^2$-norm and $H^1$-seminorm of the velocity, for $\varepsilon\in\{1,10^{-3},10^{-6},0\}$, with $k=0$ (left) and $k=1$ (right).}
  \label{fig:ex1}
\end{figure}

% %%%%%%%%%%%%%%%%%%%%
\subsection{An example with boundary layers}
% %%%%%%%%%%%%%%%%%%%%

Inspired by the numerical example presented in \cite{LR26}, we consider the domain $\OO \defi (0,1)\times(0,1)$, with parameters $\balpha=(1,1)$, $\nu=10^{-2}$ and $\theta=1$. The source term $\ff$ is chosen so that the exact solution of problem (P) reads
\begin{equation}
  \uu(x,y)=\left(\dfrac{\partial\phi}{\partial y},-\dfrac{\partial\phi}{\partial x}\right)
  \qquad\text{ and }\qquad
  p(x,y)=e^{x+y}-(e-1)^2\,,
  \label{sol_example_2}
\end{equation}
where
\[
  \phi(x,y)=x^2\left(1-e^{\lambda(x-1)}\right)^2y^2\left(1-e^{\lambda(y-1)}\right)^2,
\]
with $\lambda=\dfrac{1}{2\nu}$.
We impose homogeneous Dirichlet boundary conditions, $\gD=\zero$, on $\GD\defi\{(x,y)\in\partial\OO:x=0\vee y=0\}$, while the Neumann datum $\gN$ on $\GN\defi\partial\OO\setminus\GD$ is determined from the exact solution given in~\eqref{sol_example_2}.

\begin{figure}[htbp]
  \centering
  \includegraphics[width=0.45\textwidth]{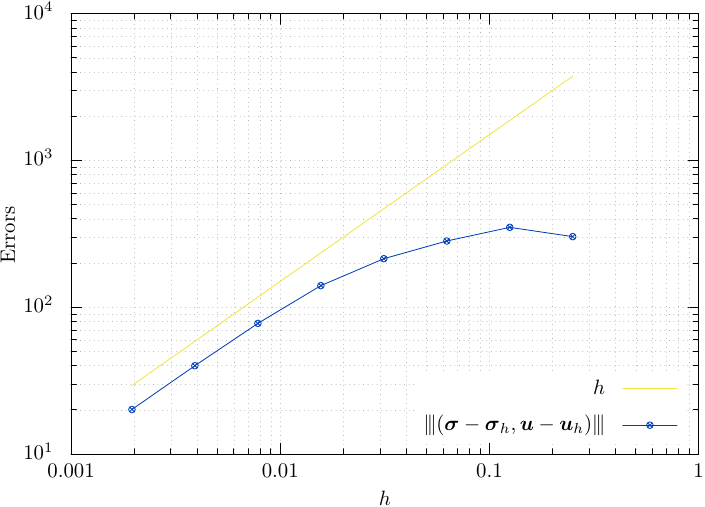}
  \hspace{10px}
  \includegraphics[width=0.45\textwidth]{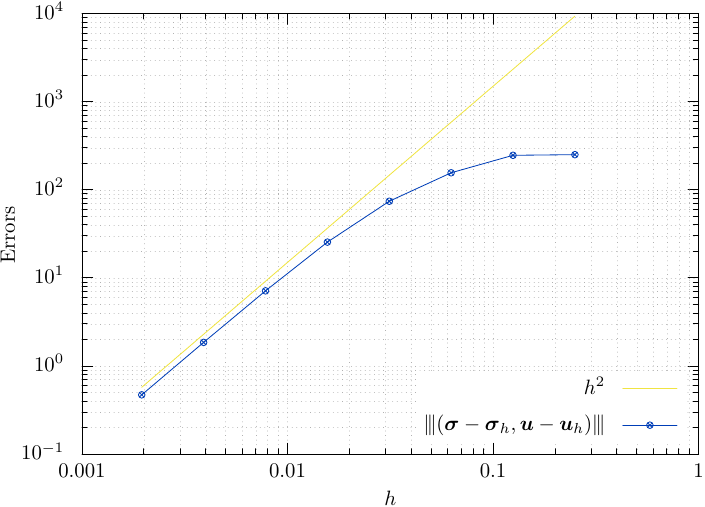}
  \caption{An example with boundary layers. Convergence history of the error energy norm $\nh{(\bsigma-\bsigma_h,\uu-\uu_h)}$ with $k=0$ (left) and $k=1$ (right).}
    \label{fig:ex2_energy_norm}
\end{figure}

\begin{figure}[htbp]
  \centering
  \includegraphics[width=0.45\textwidth]{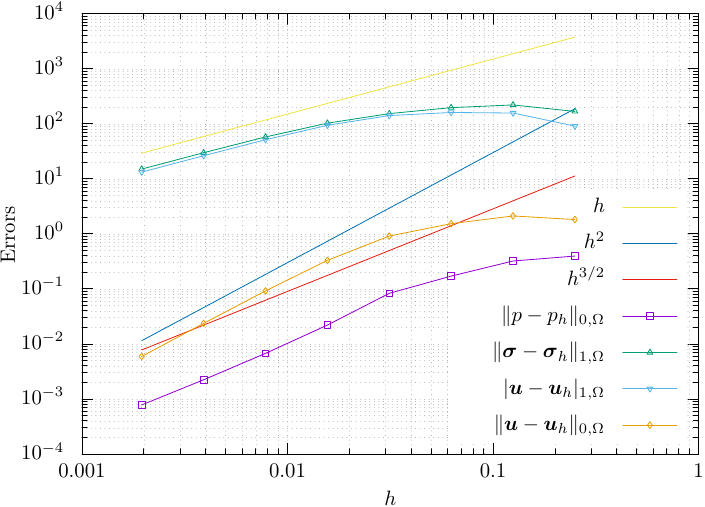}
  \hspace{10px}
  \includegraphics[width=0.45\textwidth]{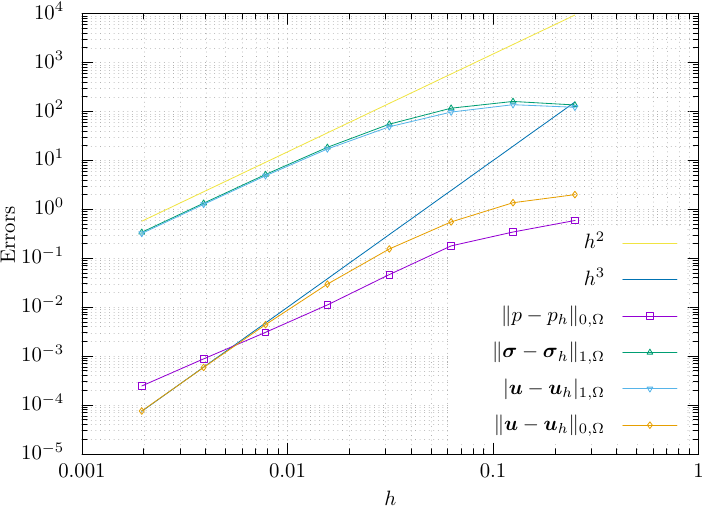}
  \caption{An example with boundary layers. Convergence histories of the errors in the $L^2$-norm of the pressure, the $\mathbf{H}(\mathrm{div})$-norm of the pseudostress tensor, and the $L^2$-norm and $H^1$-seminorm of the velocity, with $k=0$ (left) and $k=1$ (right).}
  \label{fig:ex2}
\end{figure}

The first component of the velocity field $\uu$ exhibits a boundary layer along $y=1$, particularly near the corner $(1,1)$, whereas the second component exhibits a similar behavior along $x=1$. Figure~\ref{fig:ex2_energy_norm} illustrates the convergence behavior of the error in the energy norm $\nh{(\bsigma-\bsigma_h,\uu-\uu_h)}$, for $k=0$ and $k=1$. As expected, the presence of boundary layers affects the convergence behavior on coarse meshes. However, as the mesh is sufficiently refined, the expected convergence rates are recovered, in agreement with the theoretical predictions of Theorem~\ref{main}. A similar behavior is observed in Figure~\ref{fig:ex2} for the individual errors in the pseudostress tensor, velocity, and pressure, with the expected convergence rates recovered for both $k=0$ and $k=1$.
Figure~\ref{fig:ex2-uhph} displays the isovalues of the computed velocity magnitude $|\uu_h|$ and pressure $p_h$, whereas Figure~\ref{fig:ex2-sigmah} shows the four components of the pseudostress tensor $\bsigma_h$. All results were obtained with $k=1$ on a mesh consisting of $1{,}048{,}576$ elements. Only the numerical solutions are presented, since their isovalues are visually indistinguishable from those of the corresponding exact solutions.

\begin{figure}[htbp]
  \centering
  \includegraphics[width=0.35\textwidth]{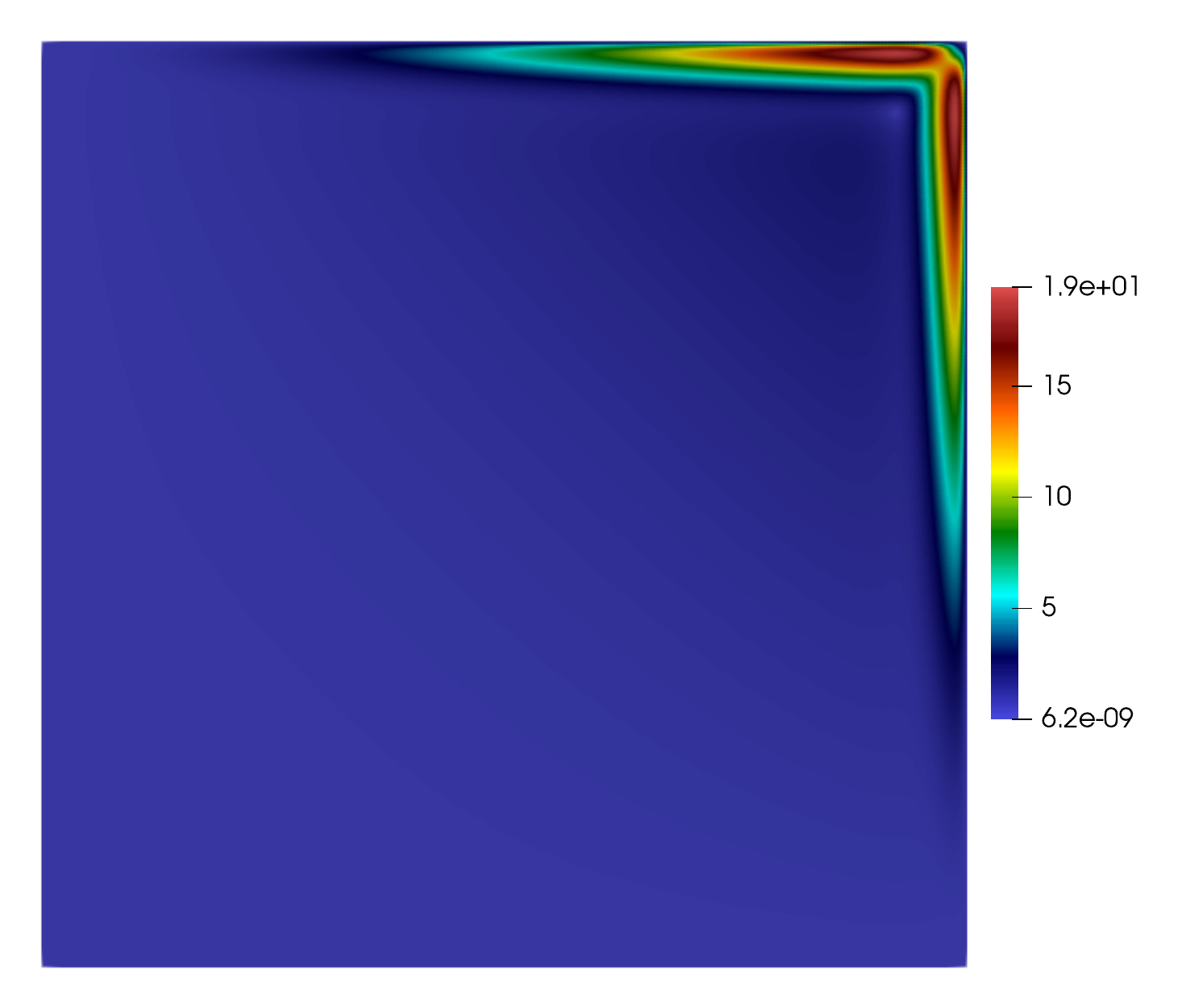}
  \hspace{8px}
  \includegraphics[width=0.35\textwidth]{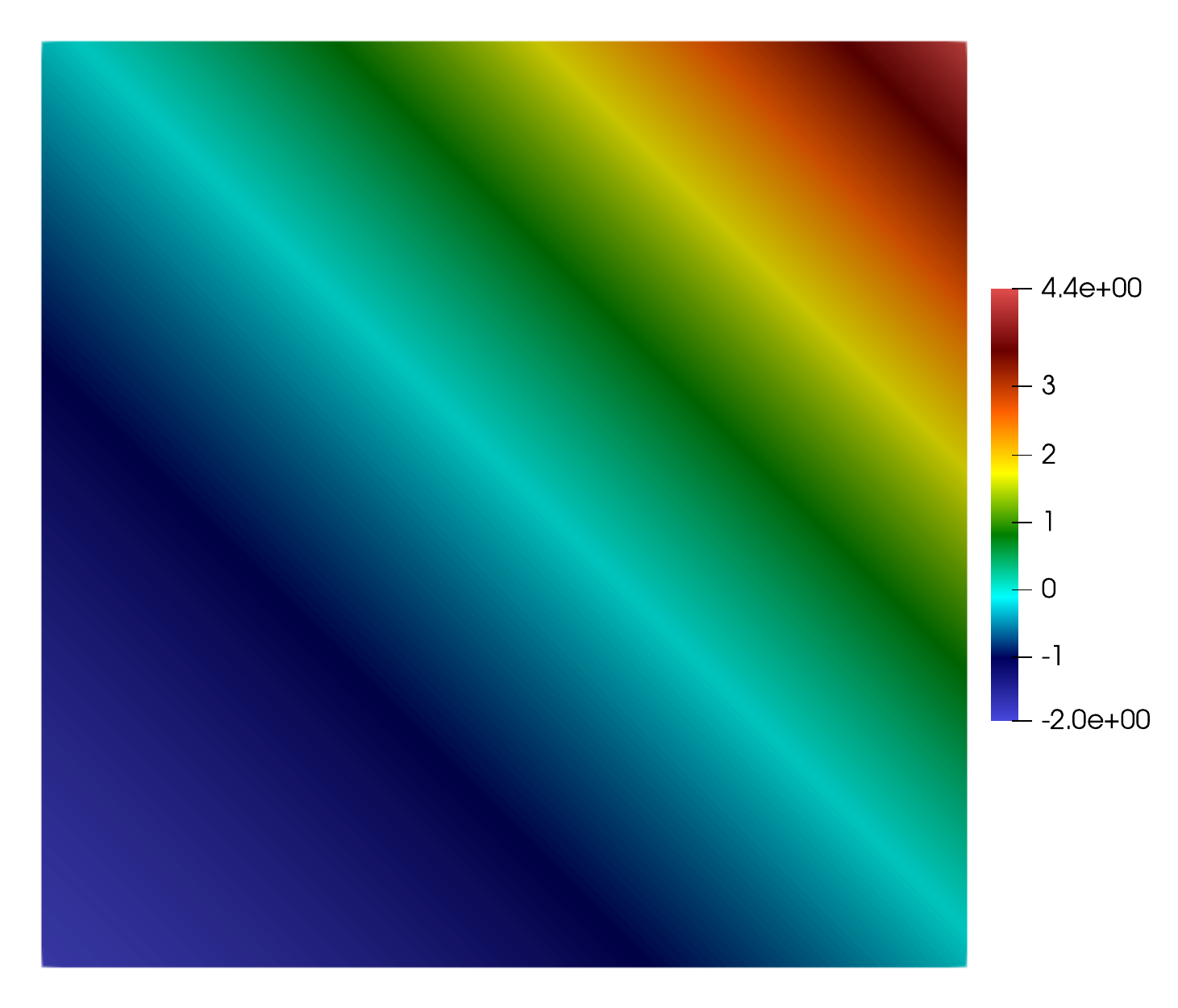}
  \caption{An example with boundary layers. Isovalues of the velocity magnitude $|\uu_h|$ (left) and pressure $p_h$ (right), computed on a mesh consisting of $1{,}048{,}576$ elements, with $k=1$.}
  \label{fig:ex2-uhph}
\end{figure}

\begin{figure}[htbp]
  \centering
  \includegraphics[width=0.35\textwidth]{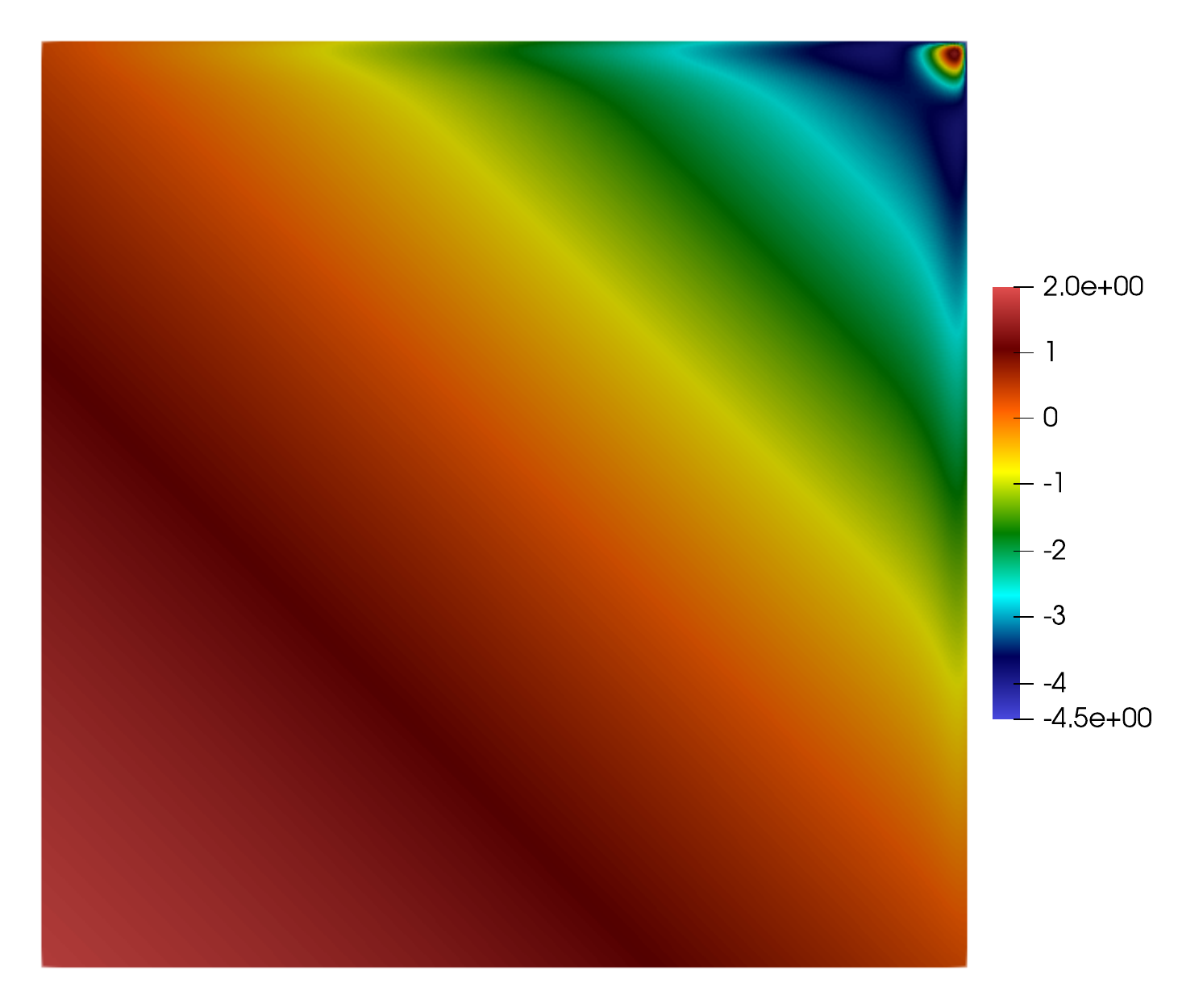}
  \hspace{8px}
  \includegraphics[width=0.35\textwidth]{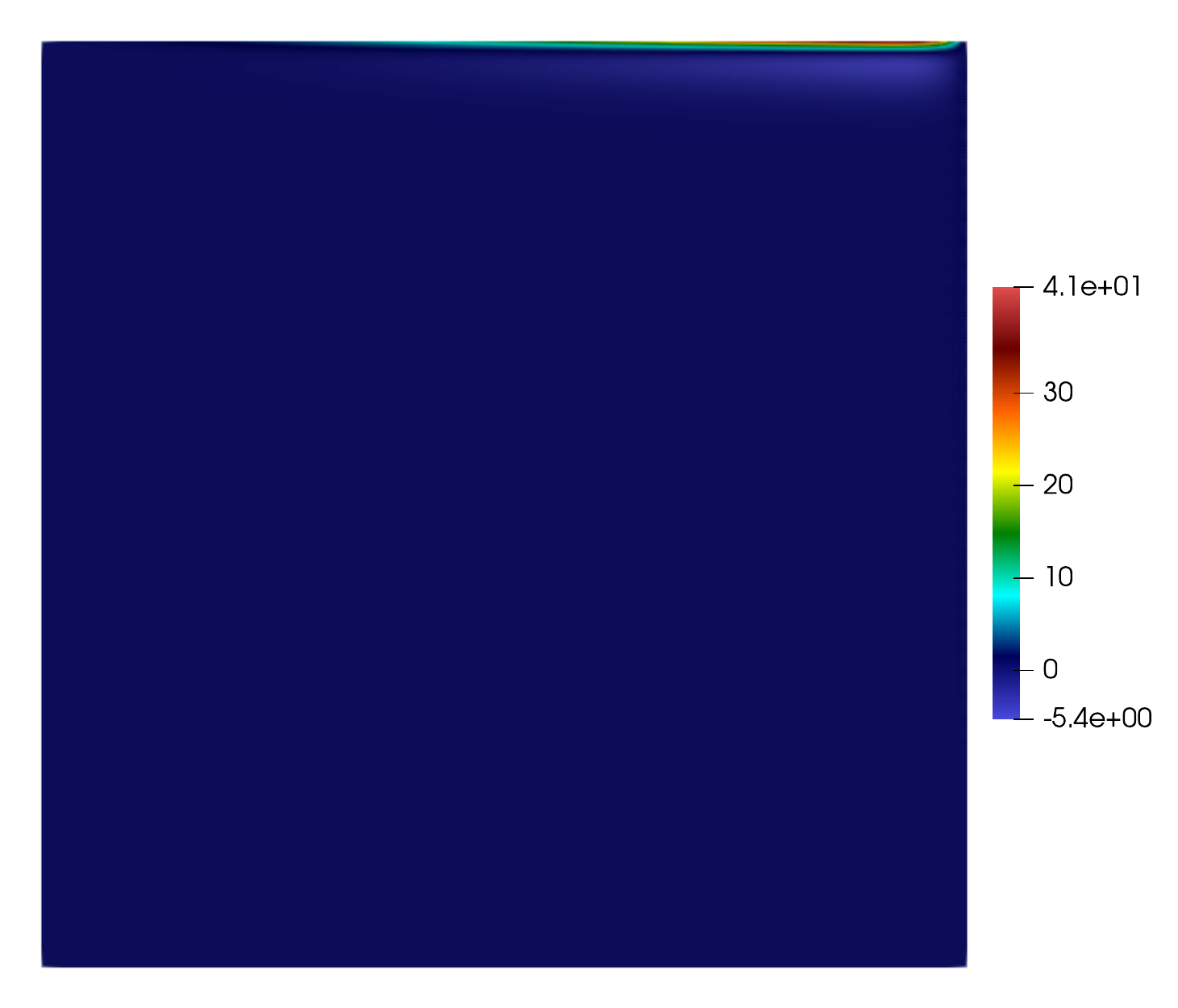}
  \\
  \includegraphics[width=0.35\textwidth]{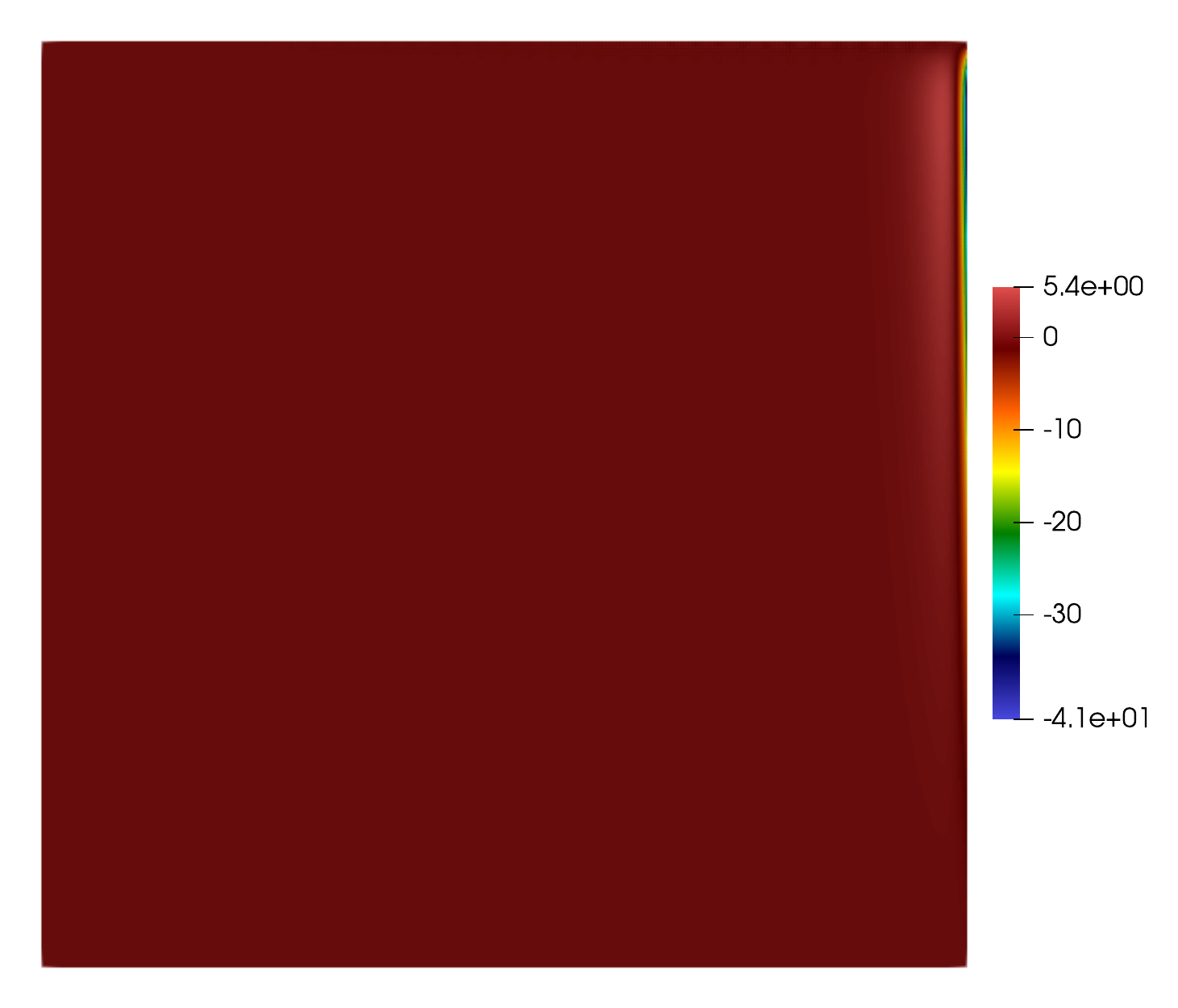}
  \hspace{8px}
  \includegraphics[width=0.35\textwidth]{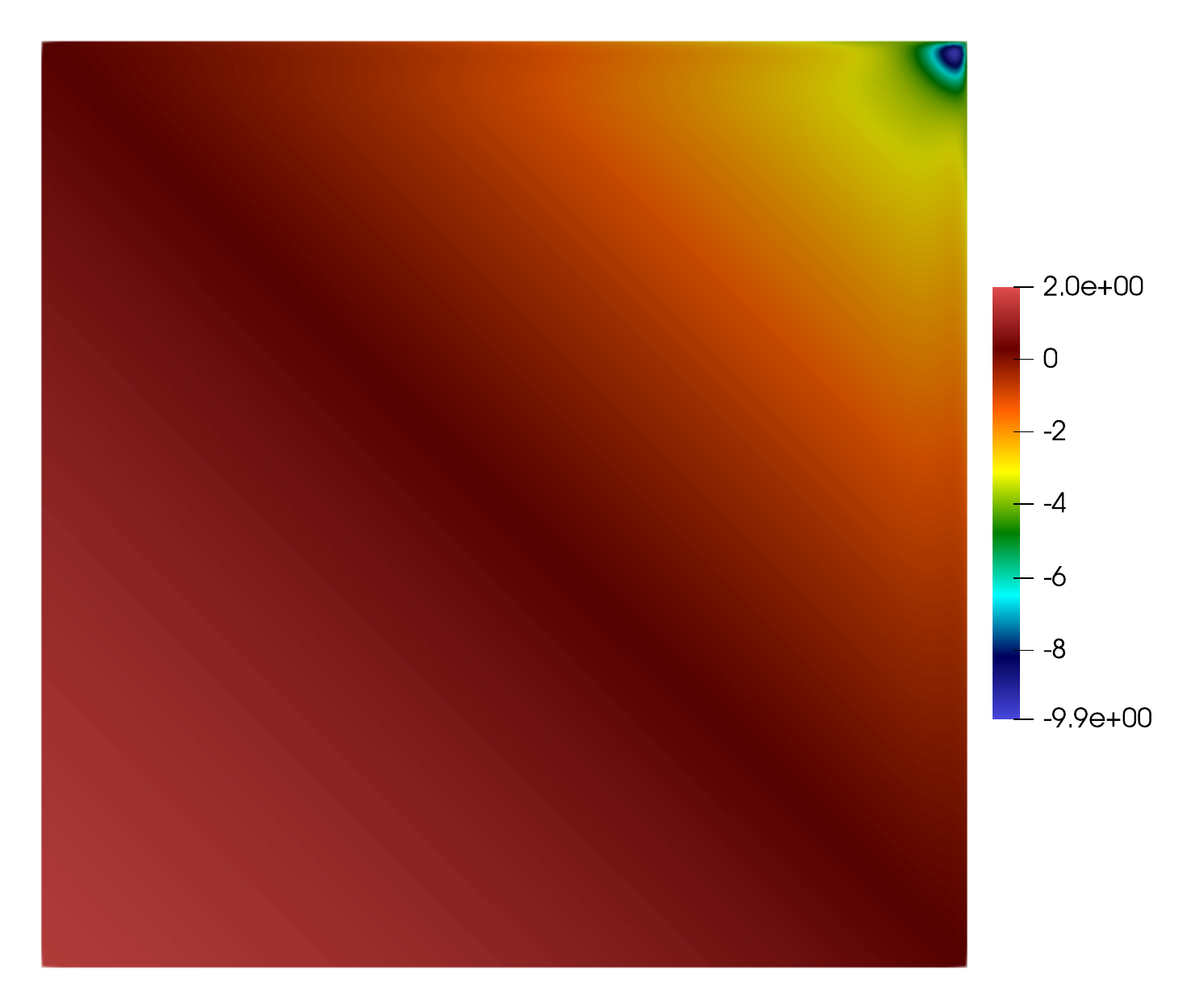}
  \caption{An example with boundary layers. Isovalues of the four components of the computed pseudostress tensor $\bsigma_h$, obtained with $k=1$ on a mesh consisting of $1{,}048{,}576$ elements: $(\bsigma_h)_{11}$ (top left), $(\bsigma_h)_{12}$ (top right), $(\bsigma_h)_{21}$ (bottom left), and $(\bsigma_h)_{22}$ (bottom right).}
  \label{fig:ex2-sigmah}
\end{figure}

% %%%%%%%%%%%%%%%%%%%%
\subsection{A manufactured solution in three dimensions}
The final experiment assesses the performance of the proposed scheme for a three-dimensional problem with a non-solenoidal convective field. To this end, we consider the manufactured solution of problem $(P)$ given by $\uu(x,y,z)=e^{-y}(xz,2z,z^2/2)$ and $p(x,y,z)=(x-0.5)(y-0.5)z^2$, defined on the unit cube $\OO\defi(0,1)^3$. We set $\theta=1.2$, $\nu=10^{-2}$, and define the convective field by $\balpha=\uu+10^{-3}(0,0,z)$. Although $\nabla\cdot\uu=0$, the convective field satisfies $\nabla\cdot\balpha=10^{-3}$, and hence $\|\nabla\cdot\balpha\|_{\infty,\OO}=10^{-3}$. Accordingly, we choose $\varepsilon=1.001\times10^{-3}$ to satisfy assumption (H1). Furthermore, Dirichlet and Neumann boundary conditions are prescribed on $\GD\defi\partial\OO\setminus\GN$ and $\GN\defi\{(x,y,z)\in\partial\OO:y=1\}$, respectively. The boundary data $\gD$ and $\gN$, as well as the source term $\ff$, are determined from the exact solution $(\uu,p)$.
% %%%%%%%%%%%%%%%%%%
%\[
%\mathbf{u}(x, y, z) = e^{-y}\begin{pmatrix} 
%	xz \\[1ex] 
%	2z \\[1ex] 
%	\displaystyle\frac{z^2}{2} 
%\end{pmatrix}
%\quad\text{ and }\quad
%p(x, y, z) = (x - 0.5)(y - 0.5) z^2
%\]8(
%defined on the unit square domain $\OO=(0,1)^3$
%For the numerical tests, we set $\theta=1.2$, $\nu=10^{-2}$, and define the velocity field $\boldsymbol{\alpha}$ as
%\[
%\boldsymbol{\alpha}(x, y, z) = \uu + 10^{-3} \begin{pmatrix} 
%	0 \\ 
%	0 \\ 
%	z 
%\end{pmatrix}.
%\]
%Due to $\Vert \nabla \cdot \boldsymbol{\alpha} \Vert_{\infty,\OO} = 10^{-3}$, in order to satisfy the property (H1), the parameter $\varepsilon$ is assumed as $\varepsilon= 1.001\times 10^{-3}$. 

Figure~\ref{fig:ex3_energy_norm} illustrates the optimal convergence rates in the energy norm established in Theorem~\ref{main}, for $k=0$ and $k=1$, on a sequence of tetrahedral meshes with decreasing mesh size $h$. Similarly, Figure~\ref{fig:ex3} shows the convergence behavior of the individual errors in the pseudostress tensor, velocity, and pressure, for which the expected convergence rates are observed.
\begin{figure}[htbp]
  \centering
  \includegraphics[width=0.45\textwidth]{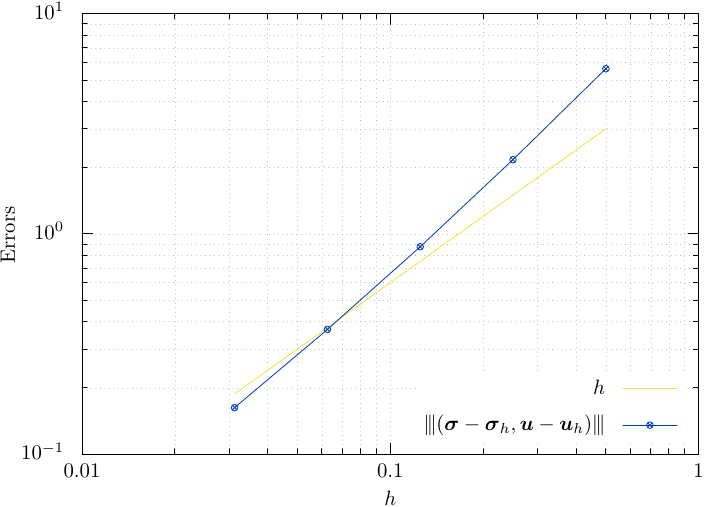}
  \hspace{10px}
  \includegraphics[width=0.45\textwidth]{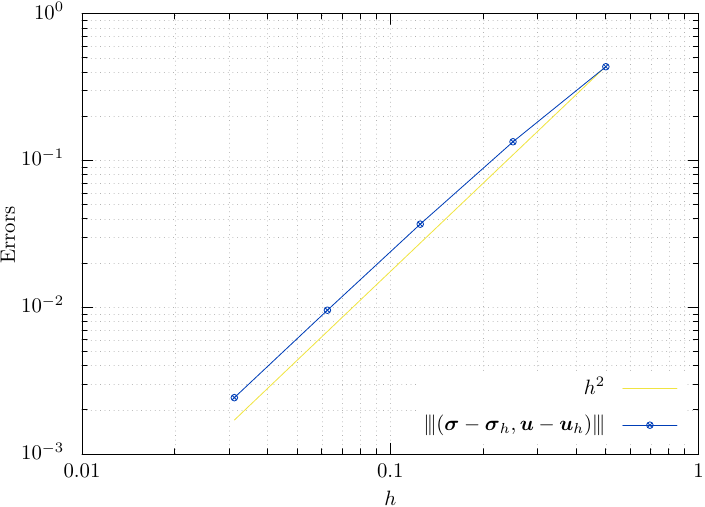}
  \caption{A manufactured solution in three dimensions. Convergence history of the error energy norm $\nh{(\bsigma-\bsigma_h,\uu-\uu_h)}$,
  for $k=0$ (left) and $k=1$ (right).}
  \label{fig:ex3_energy_norm}
\end{figure}
\begin{figure}[htbp]
  \centering
  \includegraphics[width=0.45\textwidth]{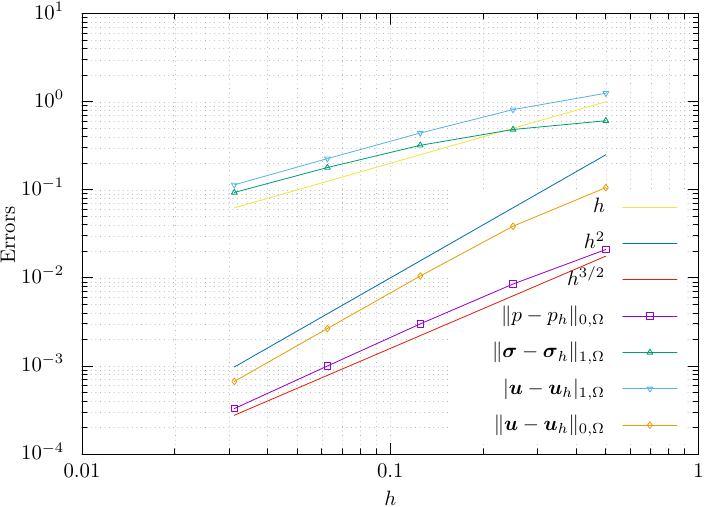}
  \hspace{8px}
  \includegraphics[width=0.45\textwidth]{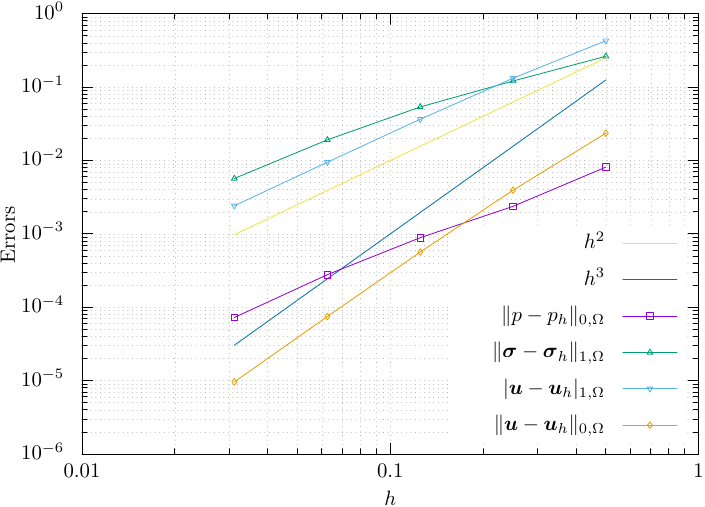}
    \caption{A manufactured solution in three dimensions. Convergence histories of the errors in the $L^2$-norm of the pressure, the $\mathbf{H}(\mathrm{div})$-norm of the pseudostress tensor, and the $L^2$-norm and $H^1$-seminorm of the velocity, with $k=0$ (left) and $k=1$ (right).}
  \label{fig:ex3}
\end{figure}
%
%
%
%%%%%%%%%%%%%%%%%%%%
\section*{Conclusions}
%%%%%%%%%%%%%%%%%%%%
In this work, we have proposed and analyzed a new skew-symmetric Nitsche scheme based on an augmented dual-mixed finite element formulation for the generalized Oseen equations with mixed boundary conditions and non-solenoidal convective fields. %By introducing the pseudostress tensor as an additional unknown, 
%the original problem based in velocity--pressure is transformed  in a mixed formulation pseudostress-velocity by 
%the treatment of boundary conditions differs from that of the classical primal velocity--pressure formulation.
By introducing the pseudostress tensor as a new unknown, the classical velocity-pressure formulation is recast as a dual-mixed pseudostress-velocity formulation, in which the boundary conditions are treated differently from those in the primal formulation.
 In particular, the Dirichlet boundary condition for the velocity is naturally incorporated through integration by parts, whereas the prescribed normal trace of the pseudostress tensor is weakly enforced through consistent Nitsche terms. We have established the well-posedness of the continuous formulation by means of a suitable lifting of the Neumann datum and proved the existence and uniqueness of the discrete solution under appropriate assumptions on the augmentation parameters, without requiring a sufficiently large Nitsche stabilization parameter. Furthermore, the stability of the discrete scheme has been established for homogeneous Dirichlet boundary conditions and general Neumann data. We have also derived optimal a priori error estimates in an energy norm under relatively mild regularity assumptions on the exact solution, thereby extending the analysis to non-solenoidal convective fields. The theoretical results have been supported by numerical experiments in two and three dimensions, implemented using the open-source finite element library \texttt{FEniCS}. The numerical results confirm the theoretical convergence rates and illustrate the performance of the proposed scheme for non-solenoidal convective fields and solutions exhibiting boundary layers. As future research directions, we envisage extending the present framework to the Navier--Stokes equations and developing adaptive finite element procedures based on a posteriori error estimators, with the aim of improving the local accuracy of the numerical approximation while controlling the computational cost.

%%%%%%%%%%%%%%%%%%%%
\section*{Acknowledgements}
%%%%%%%%%%%%%%%%%%%%
The second author was partially 
supported by Direcci\'on de Investigaci\'on of the Universidad Cat\'olica de la Sant\'isima Concepci\'on through project~DIREG 01/2025.
The third author was partially supported by ANID-Chile through grant FONDECYT de Iniciaci\'on No 11241197.

\bibliographystyle{elsarticle-harv}
\bibliography{references}

\end{document}